\numberwithin{equation}{section}
\renewcommand{\d}{\mathrm{d}}
\numberwithin{equation}{section}
\newtheorem{Thm}{Theorem}[section]
\newtheorem{Lem}[Thm]{Lemma}
\newtheorem{Prop}[Thm]{Proposition}
\newtheorem{Cor}[Thm]{Corollary}
\newtheorem{Rem}[Thm]{Remark}
\newtheorem{Def}[Thm]{Definition}
\newtheorem{Fact}[Thm]{Fact}
\newtheorem{Nota}[Thm]{Notation}
\def\zzz{\widehat{\mathbb{Z}}^d_{\geq 2}}
\def\zhat{\widehat{\mathbb{Z}}^d}
\def\R{\mathbb{R}}
\def\N{\mathbb{N}}
\def\Z{\mathbb{Z}}
\def\T{\mathbb{T}}
\def\Coo{C^\infty}
\def\cC{\mathcal{C}}
\def\cE{\mathcal{E}}
\def\cD{\mathcal{D}}
\def\cF{\mathcal{F}}
\def\cK{\mathcal{K}}
\def\cX{\mathcal{X}}
\def\cZ{\mathcal{Z}}
\def\cM{\mathcal{M}}
\def\a{\alpha}
\def\b{\beta}
\def\G{\Gamma}
\def\d{\delta}
\def\z{\zeta}
\def\vp{\varphi}
\def\h{\theta}
\def\l{\lambda}
\def\g{\mathfrak{g}}
\def\o+{\oplus}
\def\p[#1,#2]{\phi_{#1,#2}}
\def\til[#1]{\widetilde{#1}}
\def\ba{\boldsymbol{a}}
\def\bb{\boldsymbol{b}}
\def\bx{\boldsymbol{x}}
\def\by{\boldsymbol{y}}
\def\be{\textbf{e}}
\def\bk{\textbf{k}}
\def\bn{\textbf{n}}
\def\d1{{d-1}}
\def\ahat{\widehat{\boldsymbol{a}}}
\def\ove{{-1/(d-1)}}
\def\lo[#1]{\|#1\|_{L^\infty}}
\def\l2[#1]{\|#1\|_{L^2}}
\def\z[#1]{z_{#1}}
\def\=>{\Longrightarrow}
\def\<{\langle}
\def\>{\rangle}
\def\^{\wedge}
\def\+{\dagger}
\def\~={\simeq}
\def\til[#1]{\widetilde{#1}}
\def\un[#1]{\underline{#1}}
\def\over[#1]{\overline{#1}}
\def\vec[#1]{\overrightarrow{#1}}
\def\ra[#1,#2]{{^#1}_#2}
\def\veca[#1]{\begin{pmatrix}#1\end{pmatrix}}
\def\mat[#1, #2]{\left[\begin{array}{ccccc}#1\end{array}\left|\begin{array}{c}#2\end{array}\right.\right]}
\def\xto[#1]{\xrightarrow{#1}}
\def\dd[#1,#2]{\frac{#1}{#2}}
\def\del[#1,#2]{\frac{\partial #1}{\partial #2}}
\def\Facts[#1]{\begin{Fact}\mbox{}\begin{itemize}#1\end{itemize}\end{Fact}}
\def\Notation[#1]{\begin{Nota}\mbox{}\begin{itemize}#1\end{itemize}\end{Nota}}
\def\no[#1]{\| #1\|}
\def\lip[#1]{\|#1\|_{Lip}}
\def\tt{{\T^\d1}}
\DeclareFontFamily{OT1}{rsfs}{}
\DeclareFontShape{OT1}{rsfs}{n}{it}{<-> rsfs10}{}
\DeclareMathAlphabet{\mathscr}{OT1}{rsfs}{n}{it}
\begin{document}
\title{EFFECTIVE LIMIT DISTRIBUTION OF THE FROBENIUS NUMBERS}

\subjclass[2010]{Primary: 37A25 11K31; Secondary: 11B57 11D07 11H06}


\author{HAN LI }

\begin{abstract} The Frobenius number $F(\ba)$ of a lattice point $\ba$ in $\R^d$ with positive coprime coordinates, is the largest integer which can $not$ be expressed as a non-negative integer linear combination of the coordinates of $\ba$. Marklof in \cite{M} proved the existence of the limit distribution of the Frobenius numbers, when $\ba$ is taken to be random in an enlarging domain in $\R^d$. We will show that if the domain has piecewise smooth boundary, the error term for the convergence of the distribution function is at most a polynomial in the enlarging factor.
\end{abstract}
\maketitle

\section{\bf Introduction}

Let $\widehat{\Z }^d=\{\ba=(a_1,...a_d)\in\Z^d :  $gcd$ (a_1, ..., a_d)=1 \}$ be the set of primitive lattice points, and $\zzz$ be the subset of $\widehat{\Z }^d$ with coordinates $a_i\geq 2$. For any $\boldsymbol a\in\zzz$, there exists a largest natural number $F(\ba)$, that is not representable as a linear non-negative integer combination of the coordinates of $\ba$. The number $F(\ba)$ is called the Frobenius number of vector $\ba$, i.e. $$F(\ba)=\max \left\{\N \setminus  \{\boldsymbol k\cdot \ba: \boldsymbol k=(k_1,...k_d)\in\Z^d, k_i\geq0 \}\right\}.$$ 

The Frobenius number problem is also known as ``the Coin Exchange Problem". It has been studied extensively in the past 50 years using various techniques, such as combinatorics, probabilistic method, geometry of numbers, and more recently homogeneous dynamics, etc. When $d=2$, Sylvester showed in 1884 that $F(\ba)=a_1a_2-a_1-a_2$, and no explicit formula is known when $d\geq 3$. However, several upper bounds of the Frobenius numbers were obtained by the 1980's. Let $\ba\in\zzz$ with $a_1\leq a_2\leq \cdots \leq a_d$, the estimates include the work by Erd\"os and Graham \cite{EG}
\begin{equation}\label{2}
F(\ba)\leq 2a_{d-1}\left [ \dd[a_d, d]\right ]-a_d,
\end{equation}
and the work by Selmer \cite{SE}
\begin{equation}\label{1}
F(\ba)\leq 2a_d\left [\dd[a_1, d]\right ]-a_1.
\end{equation}

There are also results on the limit distribution of Frobenius numbers from different perspectives. In dimension $d=3$  using continued fractions, Bourgain-Sinai (\cite{BS}) showed that for ensembles $\Omega_N=\{a\in\zzz: a_i\leq N\}$, the limit distribution of $\dd[F(\ba), N^{3/2}]$ exists. Marklof  in \cite{M} generalized their result to higher dimensions. Before stating his results, let us first recall the notion of the covering radius. A lattice $L$ in $\R^\d1$ is a discrete additive subgroup of $\R^\d1$ with finite covolume ${\rm det}(L)$, which equals the volume of the fundamental domain of the $L$ action on $\R^\d1$. The covering radius (denoted by $Q_0$) of a lattice $L\subset\R^\d1$ is given by
\begin{equation}\label{q0}
Q_0(L)=\inf \left\{ t\in \R^+: t\Delta_{d-1} + L= \R^{d-1} \right\},
\end{equation}  where $\Delta_\d1= \left\{\boldsymbol x\in \R^\d1 :  x_i \geq 0,\ \sum^\d1_{i=1} x_i \leq 1  \right\}$ is the standard simplex. 

Lattices of covolume 1 are called unimodular. Let $G_0={\mathrm{SL}}_{d-1}(\R)$, $\Gamma_0=\rm{SL}_{d-1}(\Z)$, then $\Omega_0=G_0/\Gamma_0$ can be identified with the space of unimodular lattices in $\R^\d1$ ($g\Gamma_0\leftrightarrow g\Z^{d-1}$). Let us fix a right invariant Riemannian metric on $G_0$, giving rise to a metric and a left $G_0-$invariant probability measure $\bar{\mu}_0$ on $\Omega_0$. Let us recall
\begin{Thm}[\cite{M}]\label{Ma} Let $d\geq 3$ and $E_R=\{ L\in \Omega_{0}: Q_{0}(L) \leq R\}$. Then
\begin{itemize}
\item [(i)] for any bounded set $\cD \subset \R^d_{>0}$ with boundary of Lebesgue measure zero, and any $R \geq 0$, 
\begin{equation}\label{erro}
\lim _{T\rightarrow \infty} \dd[1, T^d] \# \left\{\ba\in \widehat{\Z}^d_{\geq 2} \cap T \cD : \dd[F(\ba), (a_1\cdots a_d)^{1/(d-1)}] \leq R \right\}  = \dd[\rm{vol}(\cD), \zeta (d)] \bar{\mu}_0(E_R).
\end{equation}
\item[(ii)] $Q_0$ is a continuous function on $\Omega_0$.
\item[(iii)] $\bar{\mu}_0(E_R)$ is continuous in $R$, i.e. $\bar{\mu}_0(\{L\in\Omega_0: Q_0(L)=R\})=0$ for any $R>0$.
\end{itemize}
\end{Thm}


We now briefly explain the existence of the limit distribution based on our private correspondence with Marklof. This is what we are going to follow in this paper which is more suitable for the purpose of ``effectivization'', and is slightly different from \cite{M}. The method is based on homogeneous dynamics, which is also combined with the geometry of numbers. Here are the main ideas: Aliev and Gruber showed in \cite{AG} that for any $\ba\in\zzz$, one associates a $d-1$ dimensional unimodular lattice $L_{\ba}\in\Omega_0$ (see Theorem \ref{ink}) with
\begin{equation}\label{sug}
\dd[F(\ba)+\sum^d_{i=1}a_i, (a_1\cdots a_d)^{1/(d-1)}]=Q_0(L_{\ba}).  
\end{equation}
This is essentially due to a geometric interpretation of the Frobenius numbers found by Kannan (\cite{KA}). 
Let  $\cD\subset\R^d_{>0}$ be a bounded connected non-empty open subset with boundary of Lebesgue measure zero. Notice that we have
\begin{equation}\label{asy}
|T\cD\cap\zhat|\sim\dd[T^d\rm{vol}(\cD), \zeta(d)].
\end{equation} 
As we will see in Section 4, the set of lattices $\{L_{\ba}: \ba\in T\cD\cap\zhat\} $ appearing in (\ref{sug}) becomes equidistributed in $\Omega_0$ as $T\rightarrow\infty$, i.e, for any bounded continuous function $\phi$ on $\Omega_0$, we have
\begin{equation}\label{con}
\lim_{T\rightarrow\infty}\dd[1, T^d]\sum_{\ba\in T\mathcal{D}\cap\zhat}\phi(L_a)=\dd[\rm{vol}(\cD), \zeta(d)]\int_{\Omega_0}\phi d\bar{\mu}_0.\end{equation}
Since $E_R=\{ L\in \Omega_{0}: Q_{0}(L) \leq R\}$ has boundary measure zero, Theorem \ref{Ma} can be deduced by applying $\chi_{E_R}$ to (\ref{con}) in the place of $\phi$.

Theorem \ref{Ma} also implies that for large $R$ and $T$, the probability that a random lattice point $\ba\in T\cD\cap\zhat$ satisfies $F(\ba)<R(a_1\cdots a_d)^{1/(d-1)}$ is greater than 99\%.  This gives a somewhat better estimate compared with (\ref{2}) and (\ref{1}), for most $\ba\in T\cD\cap\zhat$.

The aim of this paper is to estimate the decay of the function $\Psi(R)=1-\bar{\mu}_0(E_R)$ and the error term of (\ref{erro}).
\begin{Thm}\label{tail} There exists a constant $C>0$ dependent only on $d$, such that for any $R>0$ we have $\Psi(R)< CR^{-(\d1)}$. \end{Thm}

Theorem \ref{tail} improves the exponent compared with Theorem 1 of \cite{AHH}. After this paper was completed, Marklof, in an unpublished work \cite{M2} proved that there exists a constant $c_d>0$, so that $\Psi(R)>c_d R^{-(\d1)}.$ Therefore our bound is actually sharp. An asymptotic formula for $\Psi(R)$ has recently been obtained by Str\"ombergsson in \cite{AN}.

\begin{Def}\label{smooth}
We say that a subset of a manifold $M$ has ``thin boundary'', if the boundary is contained in a union of finitely many smooth connected submanifolds of $M$ whose dimensions are strictly less than the dimension of $M$.
\end{Def}
 
\begin{Thm}\label{main}
There exists $\kappa>0$ dependent only on $d$ satisfying the following property. For any $R> 0$, and any non-empty connected open subset $\cD \subseteq \{\bx\in\R^d: 0<x_i<1\}$, which has thin boundary as a subset of the manifold $\R^d$,  there exist constants $C_R$, $C_\cD>0$ such that for every $T\geq 1$
\begin{equation}\label{prob}
\left |  \dd[1, T^d] \# \left\{ \ba\in T \cD\cap \zzz : \dd[F(\ba)+\sum^d_{i=1}a_i, (a_1\cdots a_d)^{1/(d-1)}] \leq R \right\}  -\dd[\rm{vol}(\cD), \zeta(d)]\bar{\mu}_{0}(E_R)\right |< \dd[C_R C_\cD, T^\kappa].  \nonumber
\end{equation}
\end{Thm}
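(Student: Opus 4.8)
The plan is to quantify every step in the chain of reasoning that produces \eqref{con}, and then apply the resulting effective equidistribution statement to the indicator of $E_R$, controlling the discretization error that arises from replacing $\chi_{E_R}$ by a smooth function. Concretely, the argument of Marklof realizes the sum $\sum_{\ba\in T\cD\cap\zhat}\phi(L_{\ba})$ (after dividing by $T^d$) as an integral of $\phi$ against a measure supported on a piece of a Farey-type horosphere orbit inside a larger homogeneous space $\Omega = \mathrm{SL}_d(\R)/\mathrm{SL}_d(\Z)$ (or a closely related space parametrizing the lattices $L_{\ba}$). The first step is therefore to replace the qualitative equidistribution of this expanding orbit/Farey sequence by an \emph{effective} version: there is $\kappa_0>0$ and, for each $C^\infty$ test function $\psi$ on the relevant space, a Sobolev norm $\cS(\psi)$ such that
\[
\left|\dd[1,T^d]\sum_{\ba\in T\cD\cap\zhat}\psi(\cdot)\;-\;\dd[\mathrm{vol}(\cD),\zeta(d)]\int\psi\,d\bar\mu_0\right|\;<\;\dd[\cS(\psi)\,C_\cD}{T^{\kappa_0}}.
\]
Such rates follow from effective equidistribution of expanding translates of horospheres / unipotent pieces (spectral gap for $\mathrm{SL}_d$, or the effective mixing/thickening method of Kleinbock–Margulis, or the effective Farey-sequence results in the style of work building on Marklof's original equidistribution theorem). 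The dependence of $C_\cD$ on $\cD$ enters only through the geometry of $\partial\cD$; piecewise smoothness of $\partial\cD$ is exactly what guarantees that the boundary layer $\{\bx: \mathrm{dist}(\bx,\partial\cD)<\eta\}$ has volume $O(\eta)$, which is what one needs to handle the characteristic function $\chi_\cD$ — itself not smooth — by sandwiching between smooth bump functions $\chi_\cD^\pm$ with $\|\chi_\cD^\pm-\chi_\cD\|_{L^1}=O(\eta)$ and $\cS(\chi_\cD^\pm)=O(\eta^{-N})$ for some fixed $N=N(d)$, then optimizing $\eta$ as a power of $T$.

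The second step is the analogous smoothing in the \emph{target} variable. We cannot apply the effective statement directly to $\chi_{E_R}$ since it is discontinuous on $\partial E_R=\{Q_0(L)=R\}$. Instead, fix $\delta>0$ and choose $C^\infty$ functions $f_R^{\pm}$ on $\Omega_0$ with $f_R^-\le \chi_{E_R}\le f_R^+$, supported within $\delta$-neighborhoods of $E_R$ in the covering-radius value, so that $\int(f_R^+-f_R^-)\,d\bar\mu_0 \le \bar\mu_0(\{\,|Q_0(L)-R|<\delta\,\})$ and $\cS(f_R^\pm)=O_R(\delta^{-N})$. Applying the effective equidistribution from Step 1 to $f_R^\pm$ and pulling back to the count over $\ba$ (using \eqref{sug} to identify $Q_0(L_{\ba})$ with the normalized Frobenius quantity), we obtain upper and lower bounds for the left side of \eqref{prob} that differ from the main term $\tfrac{\mathrm{vol}(\cD)}{\zeta(d)}\bar\mu_0(E_R)$ by $O\!\big(\delta^{-N}C_\cD T^{-\kappa_0}\big)+O\!\big(\bar\mu_0(\{|Q_0-R|<\delta\})\big)$. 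What makes this productive is a quantitative modulus of continuity for $R\mapsto\bar\mu_0(E_R)$: Theorem~\ref{tail} already shows $\Psi(R)=1-\bar\mu_0(E_R)$ decays polynomially, and the same circle of ideas (reduction to a Diophantine/geometry-of-numbers estimate on the space of lattices) yields a bound $\bar\mu_0(\{|Q_0(L)-R|<\delta\})\le C_R'\,\delta^{\alpha}$ for some fixed $\alpha>0$ — intuitively, the level sets of $Q_0$ are sufficiently transverse that thin value-shells have small measure. Granting this, choosing $\delta = T^{-\kappa_0/(N+\alpha)}$ balances the two error terms and gives \eqref{prob} with any $\kappa<\alpha\kappa_0/(N+\alpha)$ and with $C_R$ absorbing the $R$-dependent constants from the smoothing and from the measure estimate.

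The main obstacle is the \emph{target-side modulus of continuity}, i.e. the bound $\bar\mu_0(\{L\in\Omega_0:|Q_0(L)-R|<\delta\})=O_R(\delta^\alpha)$. Theorem~\ref{Ma}(ii) only gives that this measure tends to $0$ as $\delta\to0$ with no rate, and $Q_0$ is a genuinely non-smooth function of the lattice (it is an infimum of covering conditions, so its level sets can be complicated). The way I would attack it is to use the explicit description of $Q_0$ via the successive-minima / transference geometry underlying \eqref{sug} and Kannan's interpretation: $Q_0(L)$ is comparable to an explicit piecewise-smooth function of the short vectors of $L$ (equivalently, of the Iwasawa-type coordinates on $\Omega_0$), whose value-shells one can then estimate directly by the measure of a semialgebraic tube in $\Omega_0$, using the smooth structure of the Haar measure $\bar\mu_0$ and Siegel-domain estimates to handle the cusp. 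A secondary technical point is keeping the Sobolev-norm exponent $N$ and the shape of $C_\cD$ uniform as $R$ varies and as $\delta,\eta\to0$; this is bookkeeping but must be done with care so that the final $\kappa$ depends only on $d$.
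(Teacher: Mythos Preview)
Your overall two-step plan matches the paper's: first prove an effective version of \eqref{con} for $C^1$ test functions (the paper works with $C^1$ rather than higher Sobolev norms) via the mixing/thickening machinery, handling $\chi_\cD$ by exactly the boundary-layer smoothing you describe; then smooth $\chi_{E_R}$ and bound the discretization error by optimizing the smoothing scale against $T$. The paper carries out the first step just as you outline (Theorems~\ref{forequ}--\ref{formain} together with Lemma~\ref{nice}).

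The substantive difference is in the target-side estimate. You frame the needed input as a value-shell bound $\bar\mu_0(\{|Q_0-R|<\delta\})=O_R(\delta^\alpha)$ and propose to obtain it by unpacking $Q_0$ via successive-minima/transference and then estimating a semialgebraic tube. The paper instead proves the \emph{metric}-tube estimate $\bar\mu_0(\{L:\bar d_0(L,\partial E_R)<r\})\ll_R r$ (i.e.\ that $E_R$ is ``good'' in the sense of Definition~4.9), and the argument is short and concrete: for $L\in\partial E_R$ there is a nonzero lattice vector lying on a translate of $R\,\partial\Delta_{d-1}$; for any $L'$ within distance $r$ of $L$ that vector moves by $O(r)$, so $L'\setminus\{0\}$ meets a shell of Euclidean volume $O(r)$ around that translated simplex boundary; compactness of $\partial E_R$ (Theorem~\ref{compact}) gives a finite cover by such neighborhoods, and the Siegel integral formula immediately converts ``has a nonzero point in a set of volume $O(r)$'' into a $\bar\mu_0$-bound of order $r$. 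This gives the linear rate $\alpha=1$ with essentially no analysis of the fine structure of $Q_0$, whereas your route would require doing real work on that structure.

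One cautionary remark on your construction of $f_R^\pm$: if you build them as smooth functions of the value $Q_0(L)$, they will not be $C^1$ (let alone $C^\infty$) on $\Omega_0$, since $Q_0$ is only Lipschitz. You should mollify in the Riemannian metric on $\Omega_0$ instead; then the quantity you actually need to control $\int(f_R^+-f_R^-)\,d\bar\mu_0$ is precisely the metric-tube measure above, not the value-shell measure. The two are related through the local Lipschitz constant of $Q_0$, but the metric version is both what the smoothing step genuinely requires and what is easiest to prove.
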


When $d=3$, more explicit calculation was done by Ustinov in \cite{U10} for $\cD=\{\bx\in\R^3: 0<x_i<1\}$, and Theorem \ref{main} is consistent with his result. 



\textbf{Organization of the paper}  In section 2 we will use the geometry of numbers to prove Theorem \ref{tail}. We will also give an explicit description for the $L_{\ba}$ appearing in formula (\ref{sug}). Section 3 and 4 are devoted to proving effective equidistribution of both expanding horospheres, and a Farey sequence on a specified closed horosphere, under the translation of a one sided diagonal flow. We will give an error term estimate of (\ref{con}) for non-negative compactly supported $C^1$ test functions (Theorem \ref{formain}). Theorem \ref{main} will be then proved when we show that the boundary of $E_R=\{L\in\Omega_0: Q_0(L)\leq R\}$ is thin. We will borrow many ideas from \cite{M} in Section 4 to formulate a series of equidistribution results which lead to Theorem \ref{formain}. 

\textbf{Notation} Throughout the paper we assume that the dimension $d\geq 3$, and always work with column vectors in Euclidean spaces. We use $\ll$ to represent inequalities in which the implicit constants depend on the underlying Lie groups or Euclidean spaces. In a metric space $X$, $B_X(x, r)$ stands for the open ball the radius $r$ centered at $x$. On a Lie group $G$, $B_G(r)=B_G(e, r)$ with a specified metric on $G$; in $\R^n$, $B(r)=B_{\R^n}(0, r)$ with Euclidean norm. The exponents $\alpha_1, \alpha_2, \cdots$ in section 3 and 4 depend only on the dimension $d$.

\textbf{Acknowledgments} I would like to thank J. Marklof and J. Athreya for their discussions on the concepts and ideas related to this project, and for their comments on the early draft of this paper. I am grateful to Marklof for pointing out a problem in the original formulation of Theorem \ref{main}. I am deeply indebted to my thesis advisor Prof. Gregory Margulis for suggesting me this project, for his invaluable discussions and constant encouragement. I also want to thank the anonymous referee for many helpful suggestions and corrections.

\section{\bf Covering Radius and the Frobenius Numbers}
We call a subset $K$ of $\R^\d1$ a $convex\ body$ if $K$ is a compact convex set with non-empty interior. A convex body is called centrally symmetric if it is symmetric with respect to the origin. For a centrally symmetric convex body $K$, its polar $K^*$ is also a centrally symmetric convex body defined by $K^*=\{\bx\in\R^\d1: \bx\cdot\by\leq 1, {\rm{for\ any}}\ \by\in K\}.$

We now recall the notion of $dual\ lattice$. Let $L=A\Z^\d1\in\Omega_0$ where $A\in G_0$, and let $A^*$ be the inverse transpose of $A$. We call the lattice $L^*=A^*\Z^\d1$ the dual lattice of $L$. One readily verifies that the definition of $L^*$ is independent of the choice $A$, and moreover the map $L\rightarrow L^*$ is a diffeomorphism of $\Omega_0$ which preserves $\bar{\mu}_0$. 

The covering radius $Q(K, L)$ of $K\subseteq\R^\d1$ with respect to a lattice $L$ in $\R^\d1$ is defined by $$Q(K, L):= \inf \left\{ t\in \R^+: tK + L= \R^{d-1} \right\}.$$ Clearly the function $Q_0$ defined in (\ref{q0}) satisfies $Q_0(L)
 =Q(\Delta_\d1 ,L)$ for any lattice $L$ in $\R^\d1$. (We will abbreviate $\Delta_\d1$ as $\Delta$ in what follows.)



The covering radius is related to the $Minkowski's\ successive\ minima$. Let $K\subseteq\R^\d1$ be a centrally symmetric convex body, and $L$ be a lattice in $\R^\d1$, the $i$-th minimum ($1\leq i\leq d-1$) of $K$ with respect to $L$ is defined by 
$$\lambda_i(K, L):=\min\left\{ t\in\R^+: \dim({\rm{span}}(tK\cap L)\geq i)    \right\}.$$
Clearly $0<\lambda_1(K, L)\leq\lambda_2(K, L)\cdots\leq\lambda_{d-1}(K, L).$

\begin{Lem}[Minkowski's Second Theorem]Let $K\subseteq\R^\d1$ be a centrally symmetric convex body and $L$ be a lattice in $\R^\d1$. Then 
$$\dd[2^\d1, (d-1)!]\leq \dd[{\mathrm{vol}}(K),{ \mathrm{det}}(L)]\prod _{i=1}^{d-1}\lambda_i(K, L)\leq 2^\d1.$$

\end{Lem}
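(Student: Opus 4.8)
The inequality has two halves: an elementary lower bound and the deeper upper bound, which is the classical content of Minkowski's second theorem. Write $n=d-1$ throughout.

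For the lower bound my plan is to inscribe a cross-polytope. By compactness of $K$ choose linearly independent $v_1,\dots,v_n\in L$ with $v_i\in\lambda_i(K,L)\,K$; then $v_i/\lambda_i\in K$ and, by central symmetry, $-v_i/\lambda_i\in K$, so convexity gives $C:=\mathrm{conv}\{\pm v_1/\lambda_1,\dots,\pm v_n/\lambda_n\}\subseteq K$. As $C$ is the image of the standard cross-polytope under the map $e_i\mapsto v_i/\lambda_i$, we get $\mathrm{vol}(C)=\tfrac{2^n}{n!}\,|\det(v_1,\dots,v_n)|\big/\prod_i\lambda_i$, while $|\det(v_1,\dots,v_n)|\ge\det L$ because the $v_i$ generate a finite-index sublattice of $L$. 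Rearranging $\mathrm{vol}(K)\ge\mathrm{vol}(C)$ gives the left-hand inequality; I expect no difficulty here.

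For the upper bound I would follow Minkowski's packing argument. Keep the $v_i$, chosen so that $v_1,\dots,v_i$ span the $\R$-linear hull of $\lambda_iK\cap L$, and form the flag $V_0=\{0\}\subseteq V_1\subseteq\cdots\subseteq V_n=\R^n$ with $V_i=\mathrm{span}_{\R}(v_1,\dots,v_i)$. The crucial geometric input is: if $\ell\in L$ lies in $V_i\setminus V_{i-1}$, then $\ell\notin\mathrm{int}(\lambda_iK)$. This I would derive from the definition of the successive minima by a sandwiching argument showing that for every $t<\lambda_i$ the span of $tK\cap L$ is contained in $V_{i-1}$. Granting this, one attaches to the lattice points, graded by the flag level they first fall into, suitably rescaled copies of $\tfrac12K$ -- stretched by $\lambda_i$ in directions transverse to $V_{i-1}$ -- arranged to be essentially disjoint; comparing the total volume of these bodies inside a large box $[-T,T]^n$ with $\mathrm{vol}([-T,T]^n)$ and letting $T\to\infty$ produces $2^{-n}\mathrm{vol}(K)\prod_i\lambda_i\le\det L$, which is the right-hand inequality. (An alternative is induction on $n$, projecting $\R^n$ modulo a primitive vector realizing $\lambda_1$ and using $\mu_i(\bar K,\bar L)\le\lambda_{i+1}$ together with $\det\bar L=\det L/|v_1|$, but then one also needs a Brunn--Minkowski estimate on the central slice of $K$ to close the inequality in the right direction.)

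The step I expect to be the main obstacle is the bookkeeping in the upper bound: arranging the rescaled bodies so that they are genuinely disjoint (the naive choice ``translate $\tfrac{\lambda_{\rho(\ell)}}{2}K$ by each $\ell$'' fails, since the difference of two lattice points of the same flag level can drop to a lower level), handling the degenerate case where several minima coincide and the relevant subspaces of the flag jump, and keeping in mind that vectors realizing the successive minima need not form a basis of $L$. Since only the statement of the lemma is needed in the sequel, the economical alternative is simply to cite a standard reference such as Cassels or Gruber--Lekkerkerker; the self-contained route above is what I would carry out otherwise.
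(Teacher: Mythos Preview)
The paper does not prove this lemma at all: it is stated with the attribution ``Minkowski's Second Theorem'' and then simply used. So the economical alternative you mention at the end --- citing Cassels or Gruber--Lekkerkerker --- is exactly what the paper does (implicitly, via the name).

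Your outline is the standard one and is essentially correct. The lower bound via the inscribed cross-polytope is clean and complete as stated. For the upper bound you have identified the genuine subtlety: the naive packing of translates $\ell+\tfrac{\lambda_{\rho(\ell)}}{2}K$ need not be disjoint, and one must instead pass to the anisotropically rescaled body (stretching $K$ by $\lambda_i$ in the $i$-th coordinate of a basis adapted to the flag) and pack $\tfrac12$ of \emph{that} body at the points of $L$, using the observation about $V_i\setminus V_{i-1}$ to show disjointness. This is Minkowski's original argument; a modern treatment appears in Siegel's \emph{Lectures on the Geometry of Numbers} or Cassels, \emph{An Introduction to the Geometry of Numbers}, Ch.~VIII. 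The inductive alternative you sketch does work but, as you note, requires an extra convexity input to control the slice volume.

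Since the paper treats the lemma as a black box, either route is acceptable; the citation is the choice consistent with the paper.
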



\begin{Lem}[Kannan-Lov\' asz, 2.4, 2.8 \cite{KL}] Let $K$ be a convex body and $L$ be a lattice in $\R^\d1$, and set $K-K=\{\bk_1-\bk_2: \bk_1, \bk_2\in K\}$. Then
\begin{itemize}
\item[(i)]$\lambda_\d1(K-K, L)\leq Q(K, L)\leq \sum_{i=1}^\d1\lambda_i(K-K, L),$
\item[(ii)]There exists a constant $C_d>0$, so that $\lambda_\d1(K-K, L)\lambda_1((K-K)^*, L^*)<C_d$.
\end{itemize}

\end{Lem}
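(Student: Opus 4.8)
\emph{Proof proposal.} Write $n=d-1$. The plan is to prove the two inequalities in (i) directly by the geometry of numbers and to deduce (ii) from the classical transference theorem. For the upper bound in (i), let $\lambda_i=\lambda_i(K-K,L)$ and choose linearly independent lattice vectors $v_1,\dots,v_n\in L$ realizing these minima, so that $v_i=\lambda_i(p_i-q_i)$ with $p_i,q_i\in K$; put $L'=\Z v_1+\dots+\Z v_n$, a finite-index subgroup of $L$. For $x\in\R^\d1$ write $x=\sum_i c_iv_i$ and set $m=\sum_i\lfloor c_i\rfloor v_i\in L'$, so $x-m=\sum_i\theta_i v_i$ with each $\theta_i\in[0,1)$. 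With $t=\sum_i\lambda_i$ one has
$$x-m+\sum_i\lambda_i q_i=\sum_i\theta_i\lambda_i\,p_i+\sum_i(1-\theta_i)\lambda_i\,q_i,$$
and the right-hand side is $t$ times a convex combination of the points $p_1,\dots,p_n,q_1,\dots,q_n\in K$, hence lies in $tK$. Therefore $x-m\in tK-\sum_i\lambda_i q_i=t\bigl(K-\tfrac1t\sum_i\lambda_i q_i\bigr)$; as $x$ was arbitrary this gives $Q(K-\tfrac1t\sum_i\lambda_i q_i,\,L')\le t$, and since the covering radius is invariant under translating the body and can only decrease when $L'$ is enlarged to $L$, we conclude $Q(K,L)\le t=\sum_{i=1}^{\d1}\lambda_i(K-K,L)$.

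For the lower bound, set $t=Q(K,L)$; a short compactness argument shows the infimum is attained, i.e.\ $tK+L=\R^\d1$. Let $S=t(K-K)\cap L$, and observe that $\ell\in S$ precisely when the translates $tK$ and $tK+\ell$ meet. Given $\ell\in L$, join a point $a\in tK$ to $a+\ell\in tK+\ell$ by a path and cover it by the convex — hence connected — translates $tK+\ell'$; since a connected set written as a finite union of nonempty closed sets has connected intersection graph, one extracts a chain $0=\ell'_0,\dots,\ell'_m=\ell$ in $L$ with consecutive translates meeting, whence $\ell=\sum_j(\ell'_{j+1}-\ell'_j)\in\langle S\rangle$. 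Thus $S$ generates $L$, and in particular spans $\R^\d1$, so $t(K-K)$ contains $n$ linearly independent lattice vectors and $\lambda_{\d1}(K-K,L)\le t$.

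Part (ii) is then an instance of the transference theorem for the centrally symmetric body $C=K-K$ and its polar: $\lambda_{\d1}(C,L)\,\lambda_1(C^{*},L^{*})\le d-1$ by Banaszczyk's bound, or $\le (d-1)!$ by Mahler's, which yields (ii) with an explicit $C_d$. The only step requiring real care is the upper bound in (i): because $K$ is not assumed symmetric (in the application $K$ is the simplex $\Delta$), one cannot simply invoke a covering estimate for $K-K$. The device that resolves this is to split each $v_i=\lambda_ip_i-\lambda_iq_i$ into a ``$p_i$-part'' of weight $\theta_i$ and a ``$q_i$-part'' of weight $1-\theta_i$ and to absorb the constant vector $\sum_i\lambda_iq_i$ into a translate of $K$, so that after translation all coefficients are nonnegative and sum exactly to $\sum_i\lambda_i$; the remaining points — attainment of the infimum in $Q$, passage to the sublattice $L'$, and the connectivity argument — are routine.
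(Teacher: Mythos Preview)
The paper does not supply its own proof of this lemma: it is quoted from Kannan--Lov\'asz \cite{KL} (their 2.4 and 2.8) and used as a black box in the proofs of Theorem~\ref{compact} and Theorem~\ref{tail}. So there is nothing in the paper to compare against.

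Your argument is correct and is essentially the original Kannan--Lov\'asz proof. The upper bound in (i) is exactly their covering argument via a fundamental parallelepiped of the sublattice $L'$, with your decomposition $x-m+\sum_i\lambda_iq_i=\sum_i\theta_i\lambda_ip_i+\sum_i(1-\theta_i)\lambda_iq_i$ being the clean way to see that an asymmetric $K$ still works. The lower bound is their connectivity argument; one small point worth making explicit is why only finitely many translates $tK+\ell'$ meet the compact segment from $a$ to $a+\ell$ (immediate since $L$ is discrete and $tK$ bounded), after which the ``connected union of closed sets has connected intersection graph'' step is rigorous. If you prefer to avoid the attainment of the infimum altogether, note the same chain argument applied at any $t'>Q(K,L)$ gives $\lambda_{d-1}(K-K,L)\le t'$, and then let $t'\downarrow Q(K,L)$. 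Part (ii) is, as you say, the standard transference inequality for the symmetric body $C=K-K$; citing Mahler or Banaszczyk is fine and matches what Kannan--Lov\'asz record.
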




\begin{Lem}\label{compact}
The function $Q_0$ defined in (\ref{q0}) is proper on $\Omega_0$, i.e. $E_R=\{ L\in \Omega_{0}: Q_{0}(L) \leq R\}$ is a compact subset of $\Omega_0$ for any $R\geq 0$.
\end{Lem}
\begin{proof}
By Lemma 2.1 and (i) of 2.2, $\lambda_1(\Delta-\Delta, L)$ is positively bounded below for $L\in E_R$. Mahler's Criterion implies that $E_R$ is relatively compact in $\Omega_0$. Since $Q_0$ is continuous (see (ii) of Theorem \ref{Ma}), $E_R$ is compact. 
\end{proof}

\begin{Lem}{\rm (Lemma 4.1 of \cite{AM})} For any centrally symmetric convex body $K$ in $\R^\d1$, there exists a constant $C_K>0$ so that for any $r>0$,
$$\bar{\mu}_0(\{L \in\Omega_0: \lambda_1(K, L)<r\})< C_Kr^{\d1}.$$
\end{Lem}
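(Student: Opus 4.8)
The plan is to bound the measure by a first--moment (Markov) argument after converting the event $\{\lambda_1(K,L)<r\}$ into a lattice--point--counting event for the dilate $rK$. First I would record the elementary inclusion: if $\lambda_1(K,L)<r$, then there is some $t<r$ with $\dim\mathrm{span}(tK\cap L)\geq 1$, so $tK$ contains a nonzero point of $L$; since $K$ is centrally symmetric (so $\zero\in K$, indeed in its interior) and convex, we have $tK\subseteq rK$, and therefore $rK$ contains a nonzero point of $L$. Hence
$$\{L\in\Omega_0:\lambda_1(K,L)<r\}\ \subseteq\ \cA_r:=\{L\in\Omega_0:\ L\cap rK\neq\{\zero\}\},$$
and it suffices to estimate $\bar{\mu}_0(\cA_r)$.

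For this I would invoke Siegel's mean value theorem on the space of unimodular lattices in $\R^\d1$: since $d\geq 3$, i.e. $\d1\geq 2$, for every $f\in L^1(\R^\d1)$ one has $\int_{\Omega_0}\sum_{\zero\neq\bv\in L}f(\bv)\,d\bar{\mu}_0(L)=\int_{\R^\d1}f(\bx)\,d\bx$. Applying this with $f=\chi_{rK}$, the inner sum $\Phi_r(L):=\#\{\bv\in L\setminus\{\zero\}:\bv\in rK\}$ satisfies $\int_{\Omega_0}\Phi_r\,d\bar{\mu}_0=\mathrm{vol}(rK)=r^\d1\,\mathrm{vol}(K)$. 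Because $K$ is centrally symmetric, any $\bv\in L\cap rK$ with $\bv\neq\zero$ is accompanied by $-\bv\in L\cap rK$, so $\Phi_r\geq 2$ on $\cA_r$; Markov's inequality then gives $2\,\bar{\mu}_0(\cA_r)\leq\int_{\Omega_0}\Phi_r\,d\bar{\mu}_0=r^\d1\,\mathrm{vol}(K)$. Combining with the inclusion above yields the assertion with $C_K=\mathrm{vol}(K)$, and the inequality is strict.

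The only non--routine ingredient is Siegel's formula itself, which I would state and cite (this is presumably the route taken in \cite{AM}) rather than reprove; the inclusion $\{\lambda_1<r\}\subseteq\cA_r$ and Markov's inequality are immediate, and no property of $K$ beyond central symmetry, convexity and finite volume is used. If a self--contained argument were wanted, one could instead reduce to $K=B(1)$ by comparing Minkowski gauges, write $\cA_r$ as a union over primitive $\bv\in\Z^\d1$ of the sets $\{g\Gamma_0:\ g\bv\in B(r)\}$, and estimate one such set together with the number of cosets contributing for given $r$ using Iwasawa coordinates on $G_0$ and the transitivity of $\Gamma_0$ on primitive vectors; but this merely re--derives Siegel's formula in disguise, so the mean value route is cleaner. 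The one hypothesis that genuinely matters is $\d1\geq 2$ (equivalently $d\geq 3$), which is exactly what is needed for the mean value identity in the form used above.
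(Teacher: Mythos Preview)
Your argument is correct. The paper itself does not supply a proof of this lemma: it is simply quoted from \cite{AM}, so there is no ``paper's own proof'' to compare against here. Your route via Siegel's mean value formula plus Markov's inequality is exactly the standard one (and is the argument in \cite{AM}); note that the very Siegel formula you invoke appears later in this paper as Theorem~5.1, so the ingredient is already on hand. One tiny cosmetic point: from $2\,\bar{\mu}_0(\cA_r)\le r^{\d1}\mathrm{vol}(K)$ you actually get $C_K=\tfrac12\mathrm{vol}(K)$ with a non-strict inequality; taking $C_K=\mathrm{vol}(K)$, as you do, then makes the inequality strict for every $r>0$, so the statement as written is recovered.
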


\begin{flushleft}{\bf Proof of Theorem \ref{tail}:}\end{flushleft}By Lemma 2.2, for any $R>0$, 
\begin{eqnarray}
\{L\in\Omega_0: Q_0(L)>R\}&\subseteq&\{L\in\Omega_0: \lambda_\d1(\Delta-\Delta, L)>\dd[R, \d1]\} \nonumber\\
&\subseteq&\{L\in\Omega_0: \lambda_1((\Delta-\Delta)^*, L^*)<\dd[(d-1)C_d, R]\} \nonumber
\end{eqnarray}
Since the map $L\rightarrow L^*$ preserves $\bar{\mu}_0$, by Lemma 2.4 $$\Psi(R)=\bar{\mu}_0(\{L \in\Omega_0: Q_0( L)>R\})\ll R^{-(\d1)}.$$ This completes the proof of Theorem \ref{tail}.               $\Box$ 

For $T>0$, $\boldsymbol{x}\in\R^\d1$ and $\boldsymbol{y}\in\R^\d1$ with each coordinate $y_i\neq 0$, we define
$$ D(T)=
\begin{pmatrix}
T^{-1/(d-1)}1_{d-1} & 0\\
0 & T
\end{pmatrix}    \qquad
n(\boldsymbol{x})=\begin{pmatrix}
1_{d-1} & 0\\
\boldsymbol{x}^t & 1
\end{pmatrix}\qquad
m(\boldsymbol{y})=\begin{pmatrix}
m'(\boldsymbol{y})& 0\\
0 & 1
\end{pmatrix}$$where $m'(\boldsymbol{y})=(y_1\cdots y_\d1)^{-\dd[1, \d1]}diag(y_1,\cdots, y_\d1)$. Clearly $D(T), n(\boldsymbol{x})$, $m(\boldsymbol{y})\in\rm{ SL}_d(\R)$.

\begin{Def}For every $\ba\in\Z^d$ with $a_d\neq 0$, we associate a vector $\ahat\in\R^{d-1}$ by $$\ahat=(\dd[a_1, a_d],\cdots, \dd[a_\d1,a_d])^t.$$
\end{Def}
For any $\ba\in\zzz$ let $M_{\ba}$ be the lattice $M_{\ba}:=\{\boldsymbol{b}\in\Z^\d1: \ba\cdot\bb \equiv 0\pmod{a_d} \}.$ Since $\boldsymbol{a}$ is primitive, $M_{\ba}$ has determinant $a_d$. Aliev-Gruber, based on the work of Kannan \cite{KA}, has shown in \cite{AG} that the Frobenius number $F(\ba)$ satisfies 
$$ \dd[F(\ba)+\sum^d_{i=1}a_i, (a_1\cdots a_d)^{1/(d-1)}]=Q_0(m'(\widehat{\boldsymbol{a}})(a_d^{\ove}M_{\ba})).$$
This enables us to present an explicit description of the lattice $L_{\ba}$ in formula (\ref{sug}):
\begin{Thm} \label{ink}For any $\ba\in\zzz$ the Frobenius number $F(\ba)$ satisfies
\begin{equation} \dd[F(\ba)+\sum^d_{i=1}a_i, (a_1\cdots a_d)^{1/(d-1)}]=Q_0(m(\widehat{\boldsymbol{a}})D(a_d)n(\ahat)\Z^d\cap\boldsymbol{e}_d^\perp), \end{equation}
where $\boldsymbol{e}_d=(0,\cdots,0, 1)^t$, and we identify $\R^d\cap\boldsymbol{e}_d^{\perp}$ with $\R^{d-1}$ in the obvious way. In other words, $L_{\ba}=m(\widehat{\boldsymbol{a}})D(a_d)n(\ahat)\Z^d\cap\boldsymbol{e}_d^\perp$.
\end{Thm}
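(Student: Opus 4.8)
The plan is to reconcile the two descriptions of $L_{\ba}$ appearing in Lemma 2.5 and in the statement, namely $m'(\ahat)(a_d^{\ove}M_{\ba})$ as a lattice in $\R^\d1$ versus $m(\ahat)D(a_d)n(\ahat)\Z^d\cap\be_d^\perp$ as a lattice living inside the hyperplane $\be_d^\perp\subset\R^d$. The two are related by the obvious linear isometry $\iota\colon\R^\d1\to\be_d^\perp$ sending $\bb\mapsto(\bb^t,0)^t$, and $Q_0$ only sees the lattice up to such an identification (covering radius with respect to the standard simplex $\Delta$ in the hyperplane $\be_d^\perp$ coincides with $Q_0$ of the corresponding lattice in $\R^\d1$, since $D(a_d)$, $n(\ahat)$ and $m(\ahat)$ all fix the last coordinate direction appropriately). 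So it suffices to prove the lattice identity
\begin{equation}
\iota\bigl(m'(\ahat)\,a_d^{\ove}M_{\ba}\bigr)=m(\ahat)D(a_d)n(\ahat)\Z^d\cap\be_d^\perp. \nonumber
\end{equation}

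First I would compute $D(a_d)n(\ahat)\Z^d$ explicitly. For $\bk=(\bk',k_d)^t\in\Z^d$ with $\bk'\in\Z^\d1$, one has $n(\ahat)\bk=(\bk',\ahat\cdot\bk'+k_d)^t$, and then $D(a_d)n(\ahat)\bk=(a_d^{\ove}\bk',\,a_d(\ahat\cdot\bk'+k_d))^t$. Intersecting with $\be_d^\perp$ forces the last coordinate to vanish: $a_d(\ahat\cdot\bk'+k_d)=0$, i.e. $\sum_{i=1}^\d1(a_i/a_d)k_i+k_d=0$, i.e. $\sum_{i=1}^\d1 a_i k_i\equiv 0\pmod{a_d}$ (choosing $k_d$ to be the appropriate integer, which is always possible precisely when this congruence holds). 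Hence $\bk'$ ranges exactly over $M_{\ba}$, and $D(a_d)n(\ahat)\Z^d\cap\be_d^\perp=\iota(a_d^{\ove}M_{\ba})$. Finally apply $m(\ahat)$: since $m(\ahat)=\mathrm{diag}(m'(\ahat),1)$ acts as $m'(\ahat)$ on $\be_d^\perp$ and preserves $\be_d^\perp$, we get $m(\ahat)D(a_d)n(\ahat)\Z^d\cap\be_d^\perp=\iota(m'(\ahat)\,a_d^{\ove}M_{\ba})$, which is the claimed identity; combining with Lemma 2.5 finishes the proof.

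A small point to check carefully is that intersection commutes with applying $m(\ahat)$, i.e. $m(\ahat)(\Lambda\cap\be_d^\perp)=m(\ahat)\Lambda\cap\be_d^\perp$ — this holds because $m(\ahat)$ preserves the subspace $\be_d^\perp$. Likewise one should note that $n(\ahat)$ does \emph{not} preserve $\be_d^\perp$, which is why the intersection is taken after all three matrices are applied rather than term by term; the computation above handles this correctly by intersecting at the end. One also verifies that $\det M_{\ba}=a_d$ (stated in the excerpt, using primitivity of $\ba$), so that $a_d^{\ove}M_{\ba}$ is unimodular in $\R^\d1$ and $m'(\ahat)\in\mathrm{SL}_\d1(\R)$ preserves this, consistent with $L_{\ba}\in\Omega_0$.

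The main obstacle is essentially bookkeeping rather than depth: one must be scrupulous about the identification $\iota$ between $\R^\d1$ and $\be_d^\perp$, about the fact that $Q_0$ computed in $\be_d^\perp$ with the simplex $\Delta\subset\be_d^\perp$ agrees with $Q_0$ on $\Omega_0$ under $\iota$, and about the order in which the diagonal, unipotent, and $m$-factors interact with the hyperplane. Once the explicit form of $D(a_d)n(\ahat)\bk$ is written down, the congruence defining $M_{\ba}$ falls out immediately and the theorem follows from Lemma 2.5.
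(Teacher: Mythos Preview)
Your proposal is correct and follows essentially the same route as the paper. The paper's proof is terser --- it observes that $\by\cdot\ba=0$ implies $n(\ahat)\by=(y_1,\ldots,y_{\d1},0)^t$, from which $M_{\ba}=(n(\ahat)\Z^d)\cap\be_d^\perp$ drops out --- whereas you compute the last coordinate of $D(a_d)n(\ahat)\bk$ directly and read off the congruence; but these are the same computation, and both reduce immediately to Lemma~2.5.
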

\begin{proof}Note that for any $\by=(y_1,\cdots, y_d)^t\in\R^d$ with $\by\cdot\ba=0$, we have $n(\ahat)\by=(y_1,\cdots,y_\d1, 0)^t$. Therefore $ M_{\ba}=(n(\ahat)\Z^d)\cap\boldsymbol{e}_d^\perp$. The conclusion follows immediately from the above lemma.
\end{proof}

\section{\bf Translations of Horospheres and Effective Equidistribution} 
Let $G={\rm SL}_d(\R), \Gamma={\rm SL}_d(\Z)$, $G_0={\rm SL}_\d1(\R)$. We will identify $G_0$ with the image of the embedding $A\rightarrow\begin{pmatrix}
A & 0\\
0 & 1
\end{pmatrix}$. 
We denote by $F=\{D(s): s>0\}$ the subgroup of $G$, and set $F^+=\{D(s): s>1\}$. For the subgroups of $G$ $$H=\{n(\bx): \bx\in\R^\d1\},\ \ \
H^- = \{n(\bx)^t:  \bx\in\R^\d1\}, \ \ \ H_0=\left\{\begin{pmatrix}
A & 0\\
0 & c
\end{pmatrix}: {\rm det}(A)c=1\right\},$$
their Lie algebras are invariant subspaces of the adjoint action of $F$ on $\g={\mathrm {Lie}}(G)$.


We identity the Lie algebra $\g$ with the space of $d\times d$ traceless matrices, and define an inner product on $\g$ by setting $\langle X, Y\rangle={\mathrm {tr}}(X^tY)$ ($X, Y\in\g$). This gives rise to a right invariant Riemannian metric on $G$, and hence a metric $d_S$ on any closed subgroup $S$ of $G$. We have an orthogonal decomposition of $\g$ into linear subspaces $\g={\mathrm {Lie}}(H)+{\mathrm {Lie}}(H^-)+{\mathrm {Lie}}(G_0)+{\mathrm {Lie}}(F)$. We thus fix an orthonormal basis of $\g$ coming from a basis of those subspaces
\begin{equation}\label{newbasis}
\cX=\left\{X_i: i=1, 2, \cdots, d^2-1   \right\}.
\end{equation}
We define for every $s>0$ a map $\phi_s: G\rightarrow G$ by  $$\phi_s(g)=D(s)gD(s^{-1}).$$ The restriction of $\phi_s(s>1)$ on $H'=H_0H^-$ is thus contracting in the sense that
$\phi_s(B_{H'}(r))\subseteq B_{H'}(r)$ for any $r>0$.
The group $H$ is called the expanding horospherical subgroup with respect to $F^+$ in the sense that
$H=\{g\in G: D(s^{-1})gD(s)\rightarrow 0, {\rm{as}}\ s\rightarrow +\infty\}.$

Let $\Omega=G/\Gamma$ with the metric $d_\Omega$ coming from $G$. Every $H-$orbit in $\Omega$ is called an expanding horosphere (with respect to $F^+$).
We specify a closed horosphere $$Y=\{h\Gamma: h\in H\}=\{n(\bx)\G: \bx\in\T^\d1\}\subseteq\Omega.$$By $\mu$ and $\nu$ we donte the left Haar measures on $G$ and $H$ respectively, with the induced measures on $\Omega$ and $Y$ satisfying $\bar{\mu}(\Omega)=1$ and $\bar{\nu}(Y)=1$ (which means $\nu$ and $\bar{\nu}$ correspond to the Lebesgue measures on $\R^\d1$ and $\T^\d1$ respectively). We choose a left Haar measure $\nu'$ on $H'$ so that $\mu$ is locally the product of $\nu$ and $\nu'$. This means, in view of Theorem 8.32 in \cite{BO}, for any $f\in L^1(G)$:
\begin{equation}\label{int}
\int_{H'H}f(g)d\mu=\int_{H'\times H}f(h'h)d\nu'(h')d\nu(h).
\end{equation}



\bigskip

\begin{flushleft}\textbf{Decay of Matrix Coefficients and Its Consequences} \end{flushleft}

Let $\rho$ be a (strongly continuous) unitary representation of $G$ on a Hilbert space $\mathcal{H}$. We say a vector $v\in\mathcal{H}$ is Lipschitz if (the metric $d$ below refers to the fixed metric on $G$)
$$Lip(v):=\sup\left\{\dd[\no[\rho(g)v-v], d(e, g)]: g\neq e \right\}<\infty.$$
The theory of ``matrix coefficients decay'' has been developed by many people, such as Harish-Chandra, Moore, Howe, Cowling, Katok-Spatzier, etc. Based on the previous works, Kleinbock-Margulis proved a quantitative decay of matrix coefficients for Lipschitz vectors. For our purpose we only need the following theorem which follows from Theorem A.4 of \cite{KM1}, combined with Kazhdan's property (T) for the groups $G={\rm SL}_n(\R)\ (n\geq 3)$.
 \begin{Thm}\label{KM} There exists $\alpha_1>0$ so that for any unitary representation ($\rho, \mathcal{H}$) of $G$ without G-invariant vectors, any Lipschitz vectors $v, w\in\mathcal{H}$ and every $s>1$, we have that 
$$|\langle\rho(D(s))v, w\rangle|\ll s^{-\alpha_1}(Lip(v)+\no[v])(Lip(w)+\no[w]).$$
\end{Thm}


\begin{Def}We say that a function $\psi$ on a metric (``dist'') space $X$ is Lipschitz if 
$$\|\psi\|_{Lip}:= \sup\left\{\dd[|\psi(x)-\psi(y)|, dist(x, y)]: x, y\in X, x\neq y\right\} <\infty,$$
The space of Lipschitz functions on $X$ is denoted by $Lip(X)$.
\end{Def}
\begin{Rem}\label{equal}If $X$ is a Riemannian manifold with distance coming from the Riemannian metric and $\psi$ a real-valued smooth function on $X$. Then $\no[\psi]_{Lip}=\sup\{\no[d\psi_x]: x\in X\},$
where $d\psi_x$ is the tangent map of $\psi$ at $x$, and its norm comes from the Riemannian metric on $X$.
\end{Rem}

We specify a metric on $H\times\Omega$ by setting $d((h_1, z_1), (h_2, z_2)):= d_H(h_1, h_2)+d_\Omega(z_1, z_2)$, where $h_1, h_2\in H$ and $z_1, z_2\in\Omega$. In what follows, the metrics on the product spaces are all defined in the same way. We now fix a subset
$$U=\{n(\bx): \bx\in (-2,2)^{d-1}\}\subset H.$$ 
Consider the action of $G$ on $H\times\Omega$ by $g.(h, z)=(h, gz)$ and the associated unitary representation of $G$ on $L^2(H\times\Omega)$. In this case any Lipschitz function on $H\times\Omega$ which is supported on $U\times\Omega$, is  a Lipschitz vector in $L^2(H\times\Omega)$, and moreover $Lip(\psi)\ll_U\lip[\psi]$. Notice that
$\mathcal{H}_0=\left\{\vp: \vp(h, z)=f(h),\ {\rm{for\ some}}\ f\in L^2(H) \right\}$ is the linear subspace of the $G$-stable vectors in $L^2(H\times\Omega)$. Considering the representation of $G$ on $\mathcal{H}_0^\perp$, we get that

\begin{Cor}\label{lip}
Let $\mathcal{P}: L^2(H\times\Omega)\rightarrow\mathcal{H}_0$ be the orthogonal projection. Then for any $s>1$ and functions $\vp, \psi\in Lip(H\times\Omega)\cap L^2(H\times\Omega)$ which are supported on $U\times\Omega$, we have that 
\begin{equation}\label{lipdecay}
|\langle D(s)\vp, \psi\rangle-\langle\mathcal{P}\vp, \mathcal{P}\psi\rangle|\ll(\|\vp\|_{L^2}+\|\vp\|_{Lip})(\l2[\psi]+\lip[\psi])s^{-\alpha_1}.
\end{equation} 

\end{Cor}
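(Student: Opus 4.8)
## Proof proposal for Corollary \ref{lip}

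The plan is to derive the estimate from Theorem \ref{KM} by decomposing $L^2(H\times\Omega)$ into the $G$-invariant subspace $\mathcal H_0$ and its orthogonal complement, and applying the decay of matrix coefficients only on the complement. First I would write $\vp=\mathcal P\vp+\vp_1$ and $\psi=\mathcal P\psi+\psi_1$ with $\vp_1,\psi_1\in\mathcal H_0^\perp$. Since $G$ acts trivially on $\mathcal H_0$ and unitarily overall, $\mathcal H_0^\perp$ is $G$-invariant, so $\langle g_t\vp,\psi\rangle=\langle\mathcal P\vp,\mathcal P\psi\rangle+\langle g_t\vp_1,\psi_1\rangle$, and the quantity to bound is exactly $|\langle g_t\vp_1,\psi_1\rangle|$. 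The group $G=\mathrm{SL}_d(\R)$ with $d\ge 3$ is connected simple with finite center, and the representation on $\mathcal H_0^\perp$ has no trivial subrepresentation, so Theorem \ref{KM} applies once we check the two input vectors are Lipschitz vectors of the representation and control their Lipschitz seminorms and norms.

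The key step is therefore the passage from Lipschitz functions to Lipschitz vectors: I would invoke the observation already recorded in the excerpt that a function $\psi\in Lip(H\times\Omega)\cap L^2(H\times\Omega)$ is automatically a Lipschitz vector for the $G$-representation, with $Lip(\psi)\le\|\psi\|_{Lip}$ (here one uses that $\operatorname{dist}(g.(h,z),(h,z))=\operatorname{dist}(gz,z)\le d(e,g)$ for the right-invariant metrics in play, together with unitarity to pass from the pointwise bound to the $L^2$-bound). Then I must check that the projected pieces $\mathcal P\vp$ and the complementary pieces $\vp_1=\vp-\mathcal P\vp$ are still controlled: for the norms this is immediate since $\mathcal P$ is an orthogonal projection, giving $\|\vp_1\|_{L^2}\le\|\vp\|_{L^2}$. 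For the Lipschitz seminorm of $\vp_1$ as a \emph{vector}, note $\rho(g)\vp_1-\vp_1=(\rho(g)\vp-\vp)-(\rho(g)\mathcal P\vp-\mathcal P\vp)=(\rho(g)\vp-\vp)$ because $\rho$ fixes $\mathcal H_0$ pointwise; hence $Lip(\vp_1)=Lip(\vp)\le\|\vp\|_{Lip}$, and similarly for $\psi_1$. Feeding $v=\vp_1$, $w=\psi_1$, $g=g_t$ into Theorem \ref{KM} yields
\begin{equation*}
|\langle g_t\vp_1,\psi_1\rangle|\ll e^{-d(e,g_t)\alpha_1}\bigl(Lip(\vp_1)+\|\vp_1\|\bigr)\bigl(Lip(\psi_1)+\|\psi_1\|\bigr)\ll e^{-d(e,g_t)\alpha_1}\bigl(\|\vp\|_{L^2}+\|\vp\|_{Lip}\bigr)\bigl(\l2[\psi]+\lip[\psi]\bigr).
\end{equation*}

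Finally I would replace $d(e,g_t)$ by $t$ up to a constant: since $\{g_t\}$ is a one-parameter subgroup, $d(e,g_t)$ grows linearly in $t$, i.e.\ $d(e,g_t)\ge c\,t$ for $t\ge 0$ with $c>0$ depending only on $G$ and the chosen metric, so $e^{-d(e,g_t)\alpha_1}\le e^{-c\alpha_1 t}$, and absorbing $c$ into $\alpha_1$ (or into the implied constant, as the statement permits $\alpha_1$ to depend only on $G$ and $\rho$) gives (\ref{lipdecay}). The main obstacle I anticipate is purely bookkeeping: making sure the metric comparison $\operatorname{dist}(gz,z)\le d(e,g)$ and the linear growth $d(e,g_t)\asymp t$ are stated for the same right-invariant metric used to define Lipschitz vectors, and that $\mathcal P$ genuinely commutes with the $g_t$-action so that the cross terms $\langle g_t\mathcal P\vp,\psi_1\rangle$ and $\langle g_t\vp_1,\mathcal P\psi\rangle$ vanish — both follow from $G$-invariance of $\mathcal H_0$ and $\mathcal H_0^\perp$, but should be spelled out. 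Everything else is a direct application of Theorem \ref{KM}.
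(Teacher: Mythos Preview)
Your proposal is correct and follows exactly the paper's approach: decompose into $\mathcal{H}_0$ and $\mathcal{H}_0^\perp$, observe the cross terms vanish since $G$ fixes $\mathcal{H}_0$, and apply Theorem~\ref{KM} to $\vp-\mathcal{P}\vp$, $\psi-\mathcal{P}\psi$; you have in fact supplied more detail than the paper (the computation $Lip(\vp_1)=Lip(\vp)$ and the linear growth $d(e,g_t)\asymp t$). The one point to tighten is the sentence ``the representation on $\mathcal H_0^\perp$ has no trivial subrepresentation, so Theorem~\ref{KM} applies'': the hypothesis of Theorem~\ref{KM} is \emph{isolation} from the trivial representation, not merely absence of invariant vectors, and the paper bridges this by invoking Kazhdan's property~(T) for $\mathrm{SL}_d(\R)$ with $d\ge 3$.
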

\bigskip
\begin{flushleft} \textbf{Effective Equidistribution and $F^+-$translations} \end{flushleft}
\begin{Def}\label{haowan} Let $M$ be a smooth manifold on which $G$ acts by diffeomorphisms. We define for every $X\in\g$ a vector field on $M$ and its corresponding tangent vector $\partial_mX$ at $m\in M$ by
$$\partial X(f)(m):=\lim_{t\rightarrow 0}\dd[f(\exp(tX)m)-f(m), t];\ \partial_mX(f)=\partial X(f)(m), \quad \forall\ f\in \Coo(M), m\in M.$$
\end{Def}
\begin{Rem}\label{boundedby} In what follows we will define $C^1$ norms for smooth functions on various Riemannian manifolds via Definition \ref{haowan}. We will always ensure that $\no[f]_{C^1}$ and $\no[f]_{\infty}+\no[f]_{Lip}$ bound each other by multiple constants depending on the manifold.  
\end{Rem}
We are going to present a quantitative equidistribution result of the $F^+-$translations of the $H-$orbit $\{(h, hx): h\in H\}$ (where $x\in\Omega$) in $H\times\Omega$. The method in our approach is by no means new. The proof here is a modification of the proofs for the equidistribution of the $F^+-$translations of $\{hx: h\in H\}$ in $\Omega$ (c.f. \cite{KM1} and \cite{KM2}). The technique is sometimes known as ``equidistribution via mixing'', which originated in Margulis' thesis. In contrast to \cite{KM1} and \cite{KM2}, the additional variable on the horosperical subgroup $H$ deserves special care when we do the ``thickening''. First we recall a well-known 
\begin{Lem}\label{sob}
For any $0<r<1$, there exists a nonnegative function $\theta\in\Coo(\R^n)$ supported in $B(r)$, such that $\int_{\R^n}\theta=1$, $\l2[\h]\ll r^{-n/2}$ and $\no[\theta]_{C^1}\ll r^{-n-1}$.
\end{Lem}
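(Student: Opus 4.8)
The plan is to construct the mollifier $\theta$ by smoothing the normalized indicator function of a small ball. First I would fix a nonnegative bump function $\chi \in \Coo(\R^n)$ supported in $B(1)$ with $\int_{\R^n}\chi = 1$, which exists by the standard construction (e.g. a scaled version of $\exp(-1/(1-|x|^2))$ on $B(1)$). For $0<r<1$ I would then set $\theta(x) = (r/2)^{-n}\,\chi\bigl(2x/r\bigr)$. This is supported in $B(r/2) \subseteq B(r)$, is nonnegative, and by the change of variables $y = 2x/r$ satisfies $\int_{\R^n}\theta = \int_{\R^n}\chi = 1$, so the normalization is automatic.

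Next I would verify the two norm bounds by the same change of variables. For the $L^2$ bound, $\l2[\theta]^2 = (r/2)^{-2n}\int_{\R^n}\chi(2x/r)^2\,\d x = (r/2)^{-2n}(r/2)^{n}\int_{\R^n}\chi(y)^2\,\d y = (r/2)^{-n}\l2[\chi]^2$, which gives $\l2[\theta] = (r/2)^{-n/2}\l2[\chi] \ll r^{-n/2}$ since $\chi$ is fixed. For the $C^1$ bound, $\lo[\theta] = (r/2)^{-n}\lo[\chi] \ll r^{-n} \leq r^{-n-1}$ for $r<1$, and each partial derivative satisfies $\partial_i \theta(x) = (r/2)^{-n}\cdot (2/r)\,(\partial_i\chi)(2x/r)$, so $\lo[\partial_i\theta] = (r/2)^{-n}(2/r)\lo[\partial_i\chi] \ll r^{-n-1}$; summing over $i$ gives $\no[\theta]_{C^1} \ll r^{-n-1}$. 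Here the implied constants depend only on $n$ and the fixed choice of $\chi$, which is exactly the dependence allowed by the notation $\ll$.

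I do not expect any serious obstacle: this is the textbook scaling argument for approximate identities, and the only thing to be careful about is tracking that all the scaling factors combine to give precisely the claimed powers $r^{-n/2}$ and $r^{-n-1}$ rather than, say, $r^{-n}$ in the $C^1$ bound. If the paper's $C^1$-norm on $G$ or on a Euclidean space is defined via a particular basis of invariant vector fields, one should note that on $\R^n$ the relevant right-invariant vector fields are just the coordinate partials $\partial_i$ (up to a fixed linear change of basis), so the estimate above transfers with at most a fixed constant factor. That completes the proof.

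\begin{proof}
Fix once and for all a nonnegative function $\chi\in\Coo(\R^n)$ supported in $B(1)$ with $\int_{\R^n}\chi=1$. Given $0<r<1$, define
\begin{equation}
\theta(x)=\left(\dd[r,2]\right)^{-n}\chi\!\left(\dd[2x,r]\right).
\end{equation}
Then $\theta\in\Coo(\R^n)$, $\theta\geq 0$, and $\theta$ is supported in $B(r/2)\subseteq B(r)$. By the substitution $y=2x/r$,
\begin{equation}
\int_{\R^n}\theta(x)\,\d x=\left(\dd[r,2]\right)^{-n}\int_{\R^n}\chi\!\left(\dd[2x,r]\right)\d x=\int_{\R^n}\chi(y)\,\d y=1.
\end{equation}
For the $L^2$ estimate, the same substitution gives
\begin{equation}
\l2[\theta]^2=\left(\dd[r,2]\right)^{-2n}\int_{\R^n}\chi\!\left(\dd[2x,r]\right)^2\d x=\left(\dd[r,2]\right)^{-n}\l2[\chi]^2,
\end{equation}
so $\l2[\theta]=(r/2)^{-n/2}\l2[\chi]\ll r^{-n/2}$. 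Finally, $\lo[\theta]=(r/2)^{-n}\lo[\chi]\ll r^{-n}\leq r^{-n-1}$, and for each $i$, $\partial_i\theta(x)=(r/2)^{-n}(2/r)(\partial_i\chi)(2x/r)$, whence $\lo[\partial_i\theta]=(r/2)^{-n}(2/r)\lo[\partial_i\chi]\ll r^{-n-1}$. Summing over $i$ yields $\no[\theta]_{C^1}\ll r^{-n-1}$, with all implied constants depending only on $n$ and the fixed function $\chi$.
\end{proof}
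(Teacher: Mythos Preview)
Your proof is correct and is exactly the standard scaling construction one expects here. The paper itself states this lemma without proof, treating it as a well-known property of mollifiers, so your argument supplies precisely the routine verification the author omitted.
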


\begin{Thm}\label{forequ} Let $f\in C^1(H)$, $0<r<1$ be such that $B_H(r){\rm{supp}}(f)\subset U$, and let $x\in\Omega$ be such that $\pi_x: G\rightarrow \Omega, \pi_x(g)=gx$ is injective on $B_{G}(r)supp(f)$. Then for any $s>1$ and $\vp\in C^1(H\times\Omega)$ with $supp(\vp)\subseteq U\times\Omega$, we have that (the $\alpha_1$ below is as in Theorem \ref{KM})
\begin{eqnarray}\label{forforequ}
&\ &\Big |\int_H f(h)\vp(h, D(s)hx) d\nu(h)-\int_{H\times\Omega}f(h)\vp(h, z)d\nu(h)d\bar{\mu}(z)\Big | \nonumber\\
&\ll&\lip[\vp]\cdot r\cdot\no[f]_{L^1} +r^{-d^2}\no[f]_{C^1}\no[\vp]_{C^1} s^{-\alpha_1}.
\end{eqnarray}
The $C^1$ norms here for smooth functions on various manifolds are taken to be $\no[f]_{\infty}+\no[f]_{Lip}$.
\end{Thm}

\begin{proof}
Replace $\vp$ by $\vp(h, z)-\int_\Omega\vp(h, z)d\bar{\mu}(z)$ if necessary, we may assume $\int_\Omega \vp(h, z)d\bar{\mu}(z)=0$ for every $h\in H$. 
We choose nonnegative functions $\h'\in\Coo(H')$, $\h_1\in\Coo(H)$ supported in $B_{H'}(r)$ and $B_H(r)$ respectively by Lemma \ref{sob}. We define a function $\psi$ on $H\times G$ by setting
$$\psi(h_1h_2, h'h_2)=\h'(h')\h_1(h_1)f(h_2),\quad \forall\ h_1, h_2\in H, h'\in H',$$ and setting $\psi(h,g)=0$ outside the open subset $H\times H'H$ of $H\times G$. 
Since $\int_{H'}\h'=\int_H\h_1=1$, Theorem 8.32 of \cite{BO} and formula (\ref{int}) implies
$$\int_H f(h)\vp(h, D(s)hx) d\nu(h) = \int_{H\times H'\times H}\psi(h_1, h'h_2)\vp(h_2, D(s)h_2x)d\nu(h_1)d\nu'(h')d\nu(h_2).$$
Let us define a function $\psi_x$ on $H\times\Omega$ by setting $\psi_x(h, gx)=\psi(h, g)$ for $(h,g)\in H\times(B_G(r)supp(f))$, and $\psi_x(h, z)=0$ outside the open subset $H\times B_G(r)supp(f)\Gamma\subseteq H\times\Omega$. The definition makes sense because of the injectivity assumption. It is easy to check that $\psi_x\in C^1(H\times\Omega)$, and $supp(\psi_x)\subseteq U\times\Omega$. As $\phi_s$'s (page 5) are contractions on $H'$ we have \begin{eqnarray}
 &\ &\Big |\int_H f(h)\vp(h, D(s)hx) d\nu(h)-<D(s)\psi_x, \vp>\Big | \nonumber\\
 &=& \Big |\int_{H\times H'\times H}\psi(h_1, h'h_2)\Big(\vp(h_2, D(s)h_2x)-\vp(h_2, D(s)h'h_2x)\Big)d\nu(h_1)d\nu'(h')d\nu(h_2)\Big | \nonumber\\
  &\ll& \lip[\vp]\cdot r\cdot \no[\psi]_{L^1} =\lip[\vp]\cdot r\cdot \no[f]_{L^1}. \nonumber
    \end{eqnarray}
On the other hand
$$\lip[\psi_x]\ll \no[\h'(h')\h_1(h_1)f(h_2)]_{C^1}\ll r^{-d^2}\no[f]_{C^1},\qquad \l2[\psi_x]\ll r^{-d^2/2}\l2[f].$$
Let $\mathcal{P}: L^2(H\times\Omega)\rightarrow\mathcal{H}_0$ be the orthogonal projection as in Corollary \ref{lip}. So $\mathcal{P}(\vp)=\int_\Omega\vp(h, z)d\bar{\mu}(z)=0$ by our assumption. Due to (\ref{lipdecay})
$$|\langle D(s)\psi_x, \vp\rangle|\ll r^{-d^2}\no[f]_{C^1}(\lip[\vp]+\l2[\vp])s^{-\alpha_1}\ll  r^{-d^2}\no[f]_{C^1}\no[\vp]_{C^1}s^{-\alpha_1}.$$
The theorem follows immediately.
\end{proof}

The following theorem concerns the equidistribution of $F^+-$translations of the Lebesgue measure on $\{(\bx, n(\bx)\G): \bx\in I^\d1 \}$ where $I=(0, 1)$. The result without the additional variable on $I^\d1$ is the classical equidistribution of large closed horospheres. The reason that we also consider a variable on $I^\d1$ here is that the matrix $m(\bx)$, which is related to Frobenius numbers via Theorem \ref{ink}, is defined for $\bx\in I^\d1$. This will get involved in Theorem \ref{twist}.

\begin{Thm}\label{horo}There exists a constant $\a_2>0$ such that any $\phi\in C^1(I^\d1\times\Omega)$ and $T>1$,
\begin{equation}
\Big |\int_{I^\d1} \phi(\bx, D(T)n(\bx)\Gamma) d\bx-\int_{I^\d1 \times\Omega}\phi(\bx, z) d\bx d\bar{\mu}(z) \Big | \ll \no[\phi]_{C^1} T^{-\a_2}.
\end{equation}
Here $d\bx$ is the Lebesgue measure, and $\no[\phi]_{C^1}:=\lo[f]+\max\{\|\dd[\partial,\partial x_i]f\|_{L^{\infty}}, \lo[\partial X(f)] \}$ where $\dd[\partial, \partial x_i]$ are the standard Euclidean vector fields for the  $I^{d-1}$ factor, and the $X\in\cX$ are vector fields for the $\Omega$ factor. (See (\ref{newbasis}) and Definition \ref{haowan}) 
\end{Thm}
\begin{proof}
In order to apply Theorem \ref{forequ} and get an error term estimate, we need to approximate both $\chi_{I^\d1}$ and $\phi$ by $C^1$ functions on $\R^\d1$ and $\R^\d1\times\Omega$ respectively. 

Let's fix a partition $\{E_i: 1\leq i\leq N\}$ of $I^\d1$ with the interior of each $E_i$ being an open cube, and choose $r_0>0$ so that for each $i$ we have that $B_H(r_0)\{n(\bx): \bx\in E_i\}\subseteq U$, and the restriction of $\pi: G\rightarrow\Omega,\ \pi(g)=g\Gamma$ to $\left\{gn(\bx): g\in B_G(r_0), \bx\in E_i\right\}$ is injective. Hence for every $0<r<r_0$ and $1\leq i\leq N$, there exists a function $p_i\in C^1(\R^\d1)$ supported in $E_i$, 
$$0\leq p_i\leq \chi_{E_i},\quad \mathrm{vol}(\{\bx\in E_i: p_i(\bx)\neq 1\})\ll r,\quad \no[p_i]_{C^1}\ll r^{-1}.$$

 There is also a function $p\in C^1(\R^\d1)$ supported in $I^\d1$ with 
  $$0\leq p\leq\chi_{I^\d1},\quad \mathrm{vol}(\{\bx\in I^\d1: p(\bx)\neq 1\})\ll r^{1/2},\quad \no[p]_{C^1}\ll r^{-1/2}.$$ 
 Letting $\tilde{\phi}(\bx, z)=p(\bx)\phi(\bx, z)\in C^1(\R^\d1\times\Omega)$, we then have 
$$\Big|\int_{\R^\d1} \chi_{E_i} (\bx)\phi(\bx, D(T)n(\bx)\Gamma)d\bx-\int_{\R^\d1} p_i(\bx)\tilde{\phi}(\bx, D(T)n(\bx)\Gamma)d\bx\Big|\ll C\lo[\phi]r^{1/2},$$
$$\Big |\int_{\R^\d1\times\Omega} \Big(\chi_{E_i} (\bx)\phi(\bx, z)-p_i(\bx)\tilde{\phi}(\bx, z)\Big) d\bx d\bar{\mu}(z)\Big |\ll C\lo[\phi]r^{1/2}.$$
As $p_i(\bx)$ and $\tilde{\phi}(\bx, z)$ satisfy the assumptions of Theorem \ref{forequ}, we have that
\begin{eqnarray}&\ &\Big |\int_{\R^\d1} p_i(\bx)\tilde{\phi}(\bx, D(T)n(\bx)\Gamma) d\bx-\int_{\R^\d1 \times\Omega}p_i(\bx)\tilde{\phi}(\bx, z) d\bx d\bar{\mu}(z) \Big | \nonumber\\
&\ll& \lip[\tilde{\phi}]\cdot r+r^{-d^2-1}\no[\tilde{\phi}]_{C^1}T^{-\alpha_1/(d-1)} \nonumber\\
&\ll&\lip[{\phi}]\cdot r^{1/2}+r^{-d^2-3/2}\no[{\phi}]_{C^1}T^{-\alpha_1/(d-1)}. \nonumber
\end{eqnarray}
Setting $r=r_0T^{-2\a_2}$ for some appropriate $\a_2>0$, we get ($r_0$ depends only on the dimension)
$$\Big |\int_{I^\d1} \phi(\bx, D(T)n(\bx)\Gamma) d\bx-\int_{I^\d1 \times\Omega}\phi(\bx, z) d\bx d\bar{\mu}(z) \Big | \ll \no[\phi]_{C^1} T^{-\a_2}.$$

\end{proof}

\section{\bf Translations of a Farey Sequence and Effective Equidistribution}
The Farey fractions on the torus $\tt$ are those points whose coordinates are rational numbers. We already know that the expanding horosphere $Y=\{h\Gamma: h\in H\}$ becomes equidistributed under $F^+-$translations. We are going to study the equidistribution property of Farey fractions on $Y$ in this section. We denote by $K$ the subgroup
$$K=\left\{ A\ltimes\bb:=\begin{pmatrix}
A & \bb\\
0 & 1
\end{pmatrix}: A\in G_0, \bb\in\R^\d1 \right\}\subseteq G.$$
Let $\Lambda=\{D(s)k\Gamma: s>1, k\in K \}$.  This is an embedded submanifold of $\Omega$ by (\cite{M}, Lemma 2). For any element $\lambda\in\Lambda$, there exist unique $s> 1$ and $z\in K\Gamma/\Gamma$ such that $\lambda=D(s)z$. Let $$\cD_0=\{\bx\in\R^d: 0<x_i<x_d, 0< x_d< 1\}.$$ Marklof in \cite{M} proved that under $F^+-$translation, Farey fractions $\{n(\ahat)\Gamma: \ba\in T\cD_0\cap\zhat\}$ on the closed horosphere $\{n(\bx)\Gamma: \bx\in\tt\}$ becomes equidistributed on $\Lambda$.  We will prove an effective version of this result in Theorem \ref{farey}. 
The following lemma, which describes the behavior of $F^+-$translations of the Farey fractions, is also hinted in \cite{M}.
\begin{Lem}\label{dis} For any $T>1$, the lattice points in $T\cD_0\cap\zhat$ are in one-to-one correspondence with the intersection of $\{D(T)n(\bx)\Gamma: \bx\in I^\d1\}$ with the submanifold $\Lambda$. More precisely, {\rm {(}}recall that $\ahat=(\dd[a_1, a_d],\cdots, \dd[a_\d1,a_d])^t$ {\rm{)}}
$$\left\{\ahat: \ba\in T\cD_0\cap\zhat\right\}=\left\{\bx\in I^\d1: D(T)n(\bx)\Gamma\in\Lambda\right\}.$$

\end{Lem}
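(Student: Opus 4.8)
The plan is to rewrite both sides of the asserted equality as conditions on the single lattice $D(T)n(\bx)\Z^d$ and its intersection with the hyperplane $\be_d^\perp=\{x_d=0\}$, and then match those conditions to the defining inequalities of $T\cD_0$.

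First I would give a coordinate description of $\Lambda$ (viewed, via $\Omega=G/\Gamma$, as a set of unimodular lattices in $\R^d$). Writing $\pi_d\colon\R^d\to\R$ for the projection onto the last coordinate, the shape of $K$ shows that $\{k\Z^d:k\in K\}$ is exactly the set of unimodular lattices $L$ with $\pi_d(L)=\Z$, equivalently those for which $L\cap\be_d^\perp$ is a rank $d-1$ lattice in $\be_d^\perp$ of covolume $1$. Since $D(s)$ multiplies the last coordinate by $s$ and scales $\be_d^\perp$ by $s^{-1/(d-1)}$, it follows that $\Lambda=\{M\in\Omega:\pi_d(M)=s\Z\ \text{for some}\ s>1\}$; by the product formula $\det M=\det(\pi_d M)\cdot\det(M\cap\be_d^\perp)$ (valid when $\pi_d(M)$ is discrete) this is the same as $\{M\in\Omega:M\cap\be_d^\perp\ \text{has rank}\ d-1\ \text{and covolume}<1\}$, with $s=\det(M\cap\be_d^\perp)^{-1}$ recovered from $M$.

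Next I would compute directly. The last row of $D(T)n(\bx)$ is $T(x_1,\dots,x_{d-1},1)$, so $\pi_d(D(T)n(\bx)\Z^d)=T\langle x_1,\dots,x_{d-1},1\rangle_\Z$, the $\Z$-span inside $\R$; this subgroup is discrete iff every $x_i$ is rational, and then, writing $x_i=a_i/a_d$ with $a_d$ the least common multiple of the reduced denominators, it equals $\tfrac{T}{a_d}\Z$. Here I would need to check $\gcd(a_1,\dots,a_{d-1},a_d)=1$, which is exactly where the choice of $a_d$ as the lcm of the reduced denominators enters (a one-line $p$-adic valuation argument), and this primitivity is what guarantees that $\pi_d$ of the lattice is $\tfrac{T}{a_d}\Z$ and not a proper overlattice. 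Combining with the description of $\Lambda$ from the previous step, I obtain: $D(T)n(\bx)\Gamma\in\Lambda$ if and only if all $x_i\in\Q$ and, in this canonical representation, $a_d<T$; in that case $\bx=\widehat{\ba}$ for the primitive vector $\ba=(a_1,\dots,a_d)$, which satisfies $0<a_i<a_d$ (because $\bx\in I^{d-1}$) and $a_d<T$.

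Finally I would match this with $T\cD_0\cap\zhat$: since $T\cD_0=\{\by\in\R^d:0<y_i<y_d<T\}$, a primitive vector lies in $T\cD_0\cap\zhat$ precisely when $0<a_i<a_d<T$, which are exactly the conditions just obtained under the identification $\bx=\widehat\ba$. Hence $\ba\mapsto\widehat\ba$ maps $T\cD_0\cap\zhat$ onto $\{\bx\in I^{d-1}:D(T)n(\bx)\Gamma\in\Lambda\}$ (each qualifying $\bx$ arises from its canonical $\ba$, which is primitive and lies in $T\cD_0$), and it is injective because $\widehat\ba=\widehat{\ba'}$ forces $\ba'=t\ba$ for some positive rational $t$, which must equal $1$ by primitivity of both $\ba$ and $\ba'$. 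The whole argument is a matter of unwinding definitions; the only delicate part is the bookkeeping in the first two steps — pinning down $\Lambda$ intrinsically and verifying that the canonical fractions $a_i/a_d$ give a primitive $\ba$ — and I expect that to be the main, if minor, obstacle.
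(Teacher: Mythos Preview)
Your argument is correct and follows essentially the same route as the paper: both proofs rest on the observation that $K\Gamma/\Gamma$ consists exactly of those unimodular lattices whose projection to the last coordinate is $\Z$, and then read off when $D(T)n(\bx)\Gamma$ lands in $\Lambda$ by computing the last row of $D(T)n(\bx)$. The only cosmetic difference is that for the inclusion ``$\subseteq$'' the paper appeals to Theorem~\ref{ink} (which encodes that $D(a_d)n(\ahat)\Gamma\in K\Gamma/\Gamma$), whereas you verify this directly via the last-coordinate computation and the primitivity check; your version is thus slightly more self-contained but otherwise identical in substance.
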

\begin{proof}
\begin{itemize}
\item [(``$\subseteq$'')] In view of Theorem \ref{ink}, for every $T\cD_0\cap\zhat$, $D(a_d)n(\ahat)\Gamma\in K\Gamma$. Therefore $D(T)n(\ahat)\Gamma\in\Lambda$.  
\item [(``$\supseteq$'')] For every $\bx\in I^\d1$ with $D(T)n(\bx)\Gamma\in\Lambda$, there exists $T'>1$ such that $D(T/T')n(\bx)\Gamma\in K\Gamma/\Gamma$. For any lattice in $K\Gamma/\Gamma$, the last coordinates of its lattice points form the set $\Z$. This means that $\dd[T, T'](x_1, \cdots, x_\d1, 1)^t\in\zhat.$ Hence $\bx=\ahat$ for some $\ba\in T\cD_0\cap\zhat$.
\end{itemize}

\end{proof}

To study effective equidistribution of the Farey sequence, we need to introduce the following

\begin{Def}
Let $\pi: G\rightarrow\Omega, \pi(g)=g\Gamma$ be the natural projection. For $g\in G$ and $x\in\Omega$, we set $|g|_\infty:=\max\{|a_{ij}|: g=(a_{ij})\}$ and $|x|_\infty:=\inf\{|g|_\infty: \pi(g)=x\}$. Let $\cC\subseteq \Omega$ be a Borel subset, we define
$|\cC|:=\max\left(1, \sup\left\{|x|_\infty: x\in\cC\right\}\right).\nonumber$
\end{Def}

\begin{Rem}\label{radius}
It follows from the definition that for every $g\in G, A\in G_0, x\in\Omega$ and $\bb\in\R^{d-1}$
\begin{enumerate}
\item $|gx|_\infty\ll |g|_\infty |x|_\infty$;
\item $|(A\ltimes\bb)\Gamma|_\infty\ll |A\Gamma|_\infty$. (as $(A\ltimes\bb)\Gamma=(A\ltimes\bb')\Gamma$ for some $\no[\bb']\ll|A|_\infty$)
\item Moreover, $|\cC|$ is finite for every relatively compact subset $\cC\subset\Omega$.
\end{enumerate}
\end{Rem}

\begin{Lem}\label{injective}Let $\tilde{\pi}$ be the map $\R^{d-1}\times F^+\times K\G/\G\rightarrow\Omega$ defined by $\tilde{\pi}(\bx, D(s), z)=n(\bx)D(s)z$, and $\cC$ be a relatively compact subset of $K\Gamma/\Gamma$. Then the restriction of $\tilde{\pi}$ on $B(1/(4d|\cC|))\times F^+\times \cC$ is injective.
\end{Lem}
\begin{proof}
Let $r=1/(2d|\cC|)$. It is enough to show that if $n(\bx)D(T)k_1\Z^d=k_2\Z^d$ for some $\bx\in B(r)$, $T\geq1$, $k_1\Gamma, k_2\Gamma\in\cC$, then $\bx=\textbf{0}$, $T=1$. To prove this, we choose $k_1, k_2$ so that $|k_1|_\infty, |k_2|_\infty<2|\cC|$, and let $k_1=A\ltimes\bb$.
The last coordinates of the lattice points in $k_2\Z^d$ form the set $\Z$, so does the $\Z-$span of the entries in the last row of $n(\bx)D(T)k_1$, i.e.
$$ T^{-1/(\d1)}(\Z \bx\cdot\ba_1+ \cdots+\Z \bx\cdot\ba_\d1)+\Z (T^{-1/(\d1)}\bx\cdot\bb+T)=\Z,$$
where $\ba_i$'s are the columns of $A$. By the choice of $r$ we have $|\bx\cdot\ba_i|<1$, so $\bx=0, T=1$. 
\end{proof}

Let $dk$ be the left Haar measure on $K$ such that $dk=d{\mu}_0d\bb$, where $d\bb$ is the Lebesgue measure on $\R^\d1$, and let $d\bar{k}$ be the induced probability measure on $K\Gamma/\Gamma$. 
According to Siegel's volume formula (cf. \cite{SI} and \cite{M}) and Theorem 8.32 of \cite{BO}, for any $f\in L^1(G)$

\begin{equation}\label{siegel}
\int_{HFK}f(g)d\mu(g)=\dd[1, \zeta(d)]\int_{H\times\R^+\times K}f(hD(s)k)d\nu(h)\dd[ds, s^{d+1}]dk.
\end{equation}
This naturally defines a Borel measure on $\Lambda$: $d\lambda=s^{-(d+1)}dsd\bar{k}$. We also consider for every smooth function $\phi$ on $I^\d1\times\Lambda$ the $C^1$-norm given by $$\no[\phi]_{C^1}:=\lo[f]+\sum_{i=1}^{d-1} \|\dd[\partial,\partial x_i]f\|_{L^{\infty}}+\sum_{X}\lo[\partial X(f)], \quad X\in \cX\cap\Big({\rm Lie}(F)+{\rm Lie}(G_0)+{\rm Lie}(H^-)\Big).$$

\begin{Thm}\label{farey}
Let $\cC$ be a relatively compact, open subset of $K\Gamma/\Gamma$, and $\cC'$ be a compact subset of $\cC$. Then there exists a constant $\a_3>0$, so that for every $\vp\in C^1(I^\d1 \times\Lambda)$ with ${\rm supp}(\vp)\subseteq I^{d-1}\times F^+\cC'$, and $T>1$ we have
\begin{eqnarray}\Big | \dd[1, T^d]\sum_{\ba\in T\mathcal{D}_0\cap\zhat}\vp(\ahat, D(T)n(\ahat)\Gamma)-\dd[1, \zeta(d)]\int_{I^\d1\times\Lambda}\vp(\bx, \lambda)d\bx d\lambda
\Big |
\ll |\cC|^d\no[\vp]_{C^1}T^{-\a_3}. \nonumber
\end{eqnarray}
\end{Thm}

\begin{proof}
{\bf Step (i)} Thicken an approximation of $\vp$ to a function $\psi\in C^1(I^\d1\times\Omega)$.

Let $0<r<r_0=1/(4d|\cC|)$. In the proof of this Theorem, we temporarily set $B_H(r):=\{n(\bx): \bx\in B(r) \}$.
We choose $\h\in\Coo(\R^{d-1})$ supported in $B(r)$ according to Lemma \ref{sob}; 
and $\beta\in C^\infty(F)$ so that $0\leq \beta\leq 1, supp(\beta)\subseteq\{D(s): s> e^{r/2}\}$, $\beta=1$ on $\{D(s): s\geq e^{r}\}$, and $\no[\beta]_{C^1}\ll r^{-1}$. 
We define a function $\psi$ on the open submanifold $I^{d-1}\times{B_H(r)}F^+\cC$ of $I^\d1\times\Omega$:
$$\psi(\bx, n(\by)D(s)z)=\b(D(s))\h(\by)\vp(\bx, D(s)z), \quad \forall\ x\in I^{d-1}, \by\in B(r), s>1,z\in \cC.$$ 
The function $\psi$ is well-defined by the injectivity result proved in Lemma \ref{injective} and the fact that $r<r_0$. 
By \cite[Lemma 2]{M}, $\{D(s): s\geq s_0\}K\Gamma/\Gamma$ is a closed embedded submanifold of $\Omega$ for any $s_0>0$, it follows that $\overline{B_H(r)}\cdot\{D(s): s\geq s_0\}\cdot \cC'$ is a closed subset of $\Omega$, hence the support of $\psi$ in $I^{d-1}\times{B_H(r)}F^+\cC$ is a closed subset of $I^{d-1}\times\Omega$. 
Therefore if we extend $\psi$ to a function on $I^{d-1}\times\Omega$ by setting $\psi=0$ outside the open subest $I^{d-1}\times{B_H(r)}F^+\cC\subseteq I^{d-1}\times\Omega$, we get a smooth function which we also denote by $\psi$ with abuse of notation. 
Moreover $\no[\psi]_{C^1}\ll \no[\b]_{C^1}\no[\h]_{C^1}\no[\vp]_{C^1}\ll r^{-d}\no[\vp]_{C^1}.$ Notice that we have
\begin{equation}\label{41}
\int_{I^\d1\times\Lambda}|\vp(\bx, \lambda)-\psi(\bx, \lambda)|d\bx d\lambda\ll r\no[\vp]_\infty,
\end{equation}
On the other hand, because $\vp(\ahat, D(T)n(\ahat)\Gamma)=\psi(\ahat, D(T)n(\ahat)\Gamma)$ whenever $T>e^ra_d$, hence using the fact that $\#\{\ba\in T\mathcal{D}_0: T<e^ra_d \}\ll rT^d$, it is easy to see that
\begin{equation}\label{42}
\sum_{\ba\in T\mathcal{D}_0\cap\zhat}|\vp(\ahat, D(T)n(\ahat)\Gamma)-\psi(\ahat, D(T)n(\ahat)\Gamma)|\ll T^dr\no[\vp]_\infty.
\end{equation}

{\bf Step (ii)} Compare the average of $\psi$ over the Farey sequences and horospheres.

Let $T>1, \ba\in T\mathcal{D}_0\cap\zhat$, and set $r'=r/T^{d/(\d1)}$ where $r<r_0$ as before. Let
$$\cE_r=\{x\in I^\d1: dist(x, \partial I^\d1)>r\},\quad \cM_{T,r}=\{\ba\in T\mathcal{D}_0\cap\zhat: \ahat\in \cE_{r'}, D(T)n(\ahat)\Gamma\in F^+\cC\}.$$
By our construction $\psi(\ahat, D(T)n(\ahat)\Gamma)\neq 0$ only if  $D(T)n(\ahat)\Gamma\in F^+\cC$, hence
\begin{eqnarray}
\sum_{\ba\in T\mathcal{D}_0\cap\zhat, \ahat\in \cE_{r'}}\psi(\ahat, D(T)n(\ahat)\Gamma)
&=& \sum_{\ba\in\cM_{T,r}}\int_{B(r)}\theta(\by)\psi(\ahat, D(T)n(\ahat)\Gamma)d\by.\nonumber
\end{eqnarray}
Let us consider the subset $\cZ_{T,r}:=\bigcup_{\ba\in\cM_{T, r}}\{\ahat+\by: \by\in B(r')\}$ of $I^\d1$. Our injectivity assumption implies that the union in $\cZ_{T, r}$ is disjoint. Hence we have that
\begin{eqnarray}
&\ &\dd[1, T^d]\Big|\sum_{\ba\in T\mathcal{D}_0\cap\zhat, \ahat\in \cE_{r'}}\psi(\ahat, D(T)n(\ahat)\Gamma)-\int_{\cZ_{T, r}}\psi(\bx, D(T)n(\bx)\Gamma )d\bx\Big|\label{one} \\
&=&\dd[1, T^d]\Big|\sum_{\ba\in\cM_{T,r}}\int_{B(r')}\psi(\ahat, D(T)n(\ahat+\by)\Gamma)d\by-\sum_{\ba\in\cM_{T,r}}\int_{B(r')}\psi(\ahat+\by, D(T)n(\ahat+\by)\Gamma)d\by\Big| \nonumber\\
&\ll& \lip[\psi]\int_{B(r')}\no[\by]d\by\ll \no[\varphi]_{C^1}T^{-d}.\nonumber
\end{eqnarray}
On the other hand, we have that $\{\bx\in \cE_{2r'}: \psi(\bx, D(T)n(\bx)\Gamma)\neq 0\}\subseteq \cZ_{T, r}$. To see this, notice that for any such $\bx$, we have $n(\bx_1)D(T)n(\bx)\Gamma\in F^+\cC$ for some $\bx_1\in B(r)$, because our function $\psi$ is supported in $I^\d1\times B_H(r)F^+\cC$. By Lemma \ref{dis}, $n(\bx_1)D(T)n(\bx)\Gamma=D(T)n(\ahat)\Gamma$ for some $\ba\in T\mathcal{D}_0\cap\zhat$. As $\bx\in \cE_{2r'}$, we have that $\ahat\in \cE_{r'}$. It follows from the above discussion
\begin{equation}\label{two}
\Big|\int_{I^{d-1}-\cZ_{T, r}} \psi(\bx, D(T)n(\bx)\Gamma) d\bx\Big|\ll r'\lo[\psi].
\end{equation}
Notice that $\#\{\ba\in T\cD_0\cap\zhat : \ahat\notin\cE_{r'}\}\ll T^dr'$, and $r<r_0<1$ (since $|\cC|\geq 1$) we get that 
\begin{eqnarray}
&\ &\Big|\int_{I^{d-1}} \psi(\bx, D(T)n(\bx)\Gamma) d\bx- \dd[1, T^d]\sum_{\ahat\in T\cD_0\cap\zhat}\psi(\ahat, D(T)n(\ahat)\Gamma)  \Big|\nonumber\\
&\ll& r'\lo[\varphi]+(\ref{one})+(\ref{two})
\ll r^{(1-d)}\no[\varphi]_{C^1}T^{-d/(d-1)} .\label{add}
\end{eqnarray}
{\bf Step (iii)}  Apply the equidistribution result of expanding horospheres. \\
By Theorem \ref{horo} 
\begin{eqnarray}\label{new}
\Big |\int_{I^\d1} \psi(\bx, D(T)n(\bx)\Gamma) d\bx-\int_{I^\d1 \times\Omega}\psi(\bx, z) d\bx d\bar{\mu}(z) \Big | 
\ll r^{-d}\no[\vp]_{C^1}T^{-\a_2}. 
\end{eqnarray}
Let $\cK$ be a Borel subset of $K$ which is mapped bijectively onto $\cC$ by $\pi$. By equation (\ref{siegel})
\begin{eqnarray}
\int_{I^\d1 \times\Omega}\psi(\bx, z) d\bx d\bar{\mu}(z)&=& \int_{I^\d1\times B_H(r)\cdot F^+\cdot\cK}\psi(\bx, g\Gamma)d\bx d\mu(g) \nonumber\\
&=& \dd[1, \zeta(d)]\int_{I^\d1\times\R^\d1\times\R_{>1}\times \cK}\h(\by)\psi(\bx, D(s)k\Gamma)d\bx d\by \dd[ds, s^{d+1}] dk \nonumber\\
&=& \dd[1, \zeta(d)]\int_{I^\d1\times\Lambda}\psi(\bx, \lambda)d\bx d\lambda. \label{43}
\end{eqnarray}

By setting $r=\dd[1,2]r_0T^{-\a_3}$ for some suitable constant $\a_3>0$ and combining (\ref{41}), (\ref{42}), (\ref{add}), (\ref{new}) and (\ref{43}), we conclude that for every $T>1$
\begin{eqnarray}\Big | \dd[1, T^d]\sum_{\ba\in T\mathcal{D}_0\cap\zhat}\vp(\ahat, D(T)n(\ahat)\Gamma)-\dd[1, \zeta(d)]\int_{I^\d1\times\Lambda}\vp(\bx, \lambda)d\bx d\lambda
\Big |
\ll |\cC|^d\no[\vp]_{C^1}T^{-\a_3}. \nonumber
\end{eqnarray}

\end{proof}

Recall from Theorem \ref{ink} that for any primitive lattice point $\ba\in\zzz$, the lattice $L_{\ba}$ appearing in (\ref{sug}) which produces the Frobenius number $F(\ba)$, is given by $L_{\ba}=m(\ahat)D(a_d)n(\ahat)\Z^d\cap e_d^\perp$. Let $L'_{\ba}=m(\ahat)D(a_d)n(\ahat)\G\in K\G/\G$. Translating $L'_{\ba}$ by $D(T/a_d)$, we get
$$D(T/a_d)L'_{\ba}=m(\ahat)D(T)n(\ahat)\Gamma.$$ Moreover we have $D(T/a_d)L'_{\ba}\in\Lambda$ if and only if $\ba\in T\cD_0\cap\zhat$. The following theorem shows that under this translation, the lattices $\left\{L'_{\ba}: \ba\in T\cD_0\cap\zhat\right\}$ becomes equidistributed in $\Lambda$.
\begin{Prop}\label{twist}
Let $\cC$ and $\cC'$ be as in Theorem \ref{farey}. Then there exists a constant $\a_4>0$, so that for every $\vp\in C^1(I^\d1 \times\Lambda)$ with ${\rm supp}(\vp)\subseteq I^{d-1}\times F^+\cC'$, and $T>1$ we have
\begin{eqnarray}
\Big | \dd[1, T^d]\sum_{\ba\in T\mathcal{D}_0\cap\zhat}\vp(\ahat, m(\ahat)D(T)n(\ahat)\Gamma)-\dd[1, \zeta(d)]\int_{I^\d1\times\Lambda}\vp(\bx, \lambda)d\bx d\lambda
\Big |
\ll |\cC|^d\no[\vp]_{C^1}T^{-\a_4}. \nonumber
\end{eqnarray}
\end{Prop}
\begin{Rem}\label{base}The non-effective result can be derived from Theorem \ref{farey} via the following simple fact. Suppose $\phi: X\rightarrow Y$ is a continuous map, and $\mu_n$, $\mu$ are Borel measures on $X$ so that $\mu_n$ converge to $\mu$ in the weak* topology. Then the push-forward Borel measures (on $Y$) $\phi^*(\mu_n)$ also converge to $\phi^*(\mu)$. Here the push-forward map is given by $\mathcal{T}: I^\d1\times\Lambda\rightarrow I^\d1\times\Lambda, \mathcal{T}(\bx, \lambda)=(\bx, m(\bx)\lambda)$.
\end{Rem}
\begin{proof} Let $\mathcal{T}$ be as in Remark \ref{base}. Since $\vp\circ\mathcal{T}$ is not $C^1$ in general, we need to approximate $\vp\circ\mathcal{T}$ by $C^1$ functions to get an error term estimate. We set $\cE_r=\{x\in I^\d1: dist(x, \partial I^\d1)>r\}$. Let $\h\in C^1(I^\d1)$ be a function such that $\chi_{\cE_r}\leq \h\leq\chi_{\cE_{r/2}}$ and $\no[\h]_{C^1}\ll r^{-1}$. The function $\tilde{\vp}(\bx, \lambda)=\h(\bx)\vp(\bx, m(\bx)\lambda)$ satisfies $ {\rm supp}(\tilde{\vp})\subseteq I^{\d1}\times F^+\cC_1'$ where $\cC_1'$ is a compact subset of $\cC_1=\bigcup_{x\in\cE_{r/2}}m(\bx)^{-1}\cC$. Since the entries of each $m(\bx)^{-1}\ (\bx\in\cE_{r/2})$ are bounded by $2/r$ in absolute value, it follows from Remark \ref{radius} that $|\cC_1|\ll |\cC|r^{-1}$.
\\
\\
\textbf{Claim :} There exists $n=n(d)>0$, such that $\no[\tilde{\vp}]_{C^1}\ll r^{-n}\no[\h]_{C^1}\no[\vp]_{C^1}$.\\
\textbf{Proof of the Claim:} 
We take $(\bx, \lambda)\in \cE_{r/2}\times\Lambda$ and consider the differential $d\mathcal{T}$ of $\mathcal{T}$ at this point. We use $\dd[\partial, \partial x_i]$ as the usual Euclidean tangent vector at $\by\in I^\d1$, and let $\partial_\lambda X$ be a tangent vectors at $\lambda\in\Lambda$ (see Definition \ref{haowan} and Theorem \ref{farey}). It is easy to check that
\begin{eqnarray}
d\mathcal{T}(\dd[\partial, \partial x_i])&=&\dd[\partial, \partial x_i]+\dd[1, (d-1)x_i]\partial_{m(\bx)\lambda}E_i,\  1\leq i\leq d-1 \nonumber\\
d\mathcal{T}(\partial_\lambda X)&=&\partial_{m(\bx)\lambda}Ad(m(\bx))(X), \nonumber
\end{eqnarray}
where $E_i=diag(-1,\cdots,d-2,\cdots ,-1,0)$ with $d-2$ in the $(i,i)^{th}$ entry. Hence the norm of $d\mathcal{T}$ satisfies $\no[d\mathcal{T}]\ll r^{-n}$ for some $n=n(d)$ at every $(x, \lambda)\in\cE_{r/2}\times\Lambda$. $\Box$ \\

Note that $\vp (\ahat, m(\ahat)D(T)n(\ahat)\Gamma)=\tilde{\vp}(\ahat, D(T)n(\ahat)\Gamma)$ if $\ahat\in\cE_r$. We thus have
\begin{eqnarray}
&\ &\Big | \dd[1, T^d]\sum_{\ba\in T\mathcal{D}_0\cap\zhat}\vp(\ahat, m(\ahat)D(T)n(\ahat)\Gamma)-\dd[1, \zeta(d)]\int_{I^\d1\times\Lambda}\vp(\bx, \lambda)d\bx d\lambda
\Big | \label{oo}\\
&\leq&\Big | \dd[1, T^d]\sum_{\ba\in T\mathcal{D}_0\cap\zhat}\tilde{\vp}(\ahat, D(T)n(\ahat)\Gamma)-\dd[1, \zeta(d)]\int_{I^\d1\times\Lambda}\tilde{\vp}(\bx, \lambda)d\bx d\lambda
\Big |+ \nonumber\\
&\ &\Big | \dd[1, T^d]\sum_{\ba\in T\mathcal{D}_0\cap\zhat, \ahat\in I^\d1\backslash\cE_{r}}\Big(\vp (\ahat, m(\ahat)D(T)n(\ahat)\Gamma)-\tilde{\vp}(\ahat, D(T)n(\ahat)\Gamma)\Big)\Big |+\nonumber\\
&\ &\Big| \dd[1, \zeta(d)]\int_{I^\d1\times\Lambda}\vp(\bx, \lambda)d\bx d\lambda
- \dd[1, \zeta(d)]\int_{I^\d1\times\Lambda}\tilde{\vp}(\bx, \lambda)d\bx d\lambda
\Big|  \nonumber 
\end{eqnarray}
Notice that the Haar measure $d\bar{k}$ on $K\G/\G$ is left invariant, so
$$ \int_{I^\d1\times\Lambda}\tilde{\vp}(\bx, \lambda)d\bx d\lambda= \int_{I^\d1\times\Lambda}\h(\bx){\vp}(\bx, \lambda)d\bx d\lambda.$$
Applying Theorem \ref{farey} to the function $\tilde{\vp}$ and combining the claim above, we conclude
 \begin{eqnarray}
(\ref{oo})&\ll& |\cC|^d r^{-d}\no[\tilde{\vp}]_{C^1}T^{-\a_3}+r\lo[\vp]+r\lo[\vp]\ll|\cC|^d r^{-n-d-1}\no[\vp]_{C^1}T^{-\a_3}+r\lo[\vp]  \nonumber
\end{eqnarray}
We complete the proof by setting $r=T^{-\a_4}$ for suitable $\a_4>0$.

\end{proof}

Recall that the space $\Omega_0$ is naturally embedded into $K\Gamma/\Gamma$. The map from $K\Gamma/\Gamma$ to $\Omega_0$ given by $k\Z^d\mapsto k\Z^d\cap\be_d^{\perp}$ is smooth. (As before we identify $\R^d\cap\be_d^{\perp}$ with $\R^\d1$ in the obvious way.) We consider the product Riemannian manifold of $\cD_0$ (Euclidean metric) and $\Omega_0$
$$\cM_{\cD_0}=\left\{(\bx, y, z): (\bx, y)\in\cD_0, z\in \Omega_0\right\}.$$
We will keep using the parametrization above. The product Borel measure on $\cM_{\cD_0}$ is writen as $d_{\cD_0}=d\bx dy d\bar{\mu}_0(z)$.
We set for every smooth function $f$ on $\cM_{\cD_0}$
$$\no[f]_{C^1}:=\lo[f]+\sum_{i=1}^{d-1}\|\dd[\partial,\partial x_i]f\|_{L^{\infty}}+\|\dd[\partial,\partial y]f\|_{L^{\infty}}+\sum_{X}\lo[\partial X(f)], \quad X\in \cX\cap{\rm Lie}(G_0).$$
\begin{Cor}\label{domain}
Let $\cC$ be a relatively compact, open subset of $\Omega_0$, and $\cC'$ be a compact subset of $\cC$. Then for every $\psi\in C^1(\cM_{\cD_0})$ with ${\rm supp}(\psi)\subseteq\{(\bx,y,z)\in\cM_{\cD_0}: z\in\cC'\}$, and every $T>1$ we have (the $\a_4$ below is as in Theorem \ref{twist})
$$
\Big | \dd[1, T^d]\sum_{\ba\in T\mathcal{D}_0\cap\zhat}\psi(\dd[\ba, T], m(\ahat)D(a_d)n(\ahat)\Gamma\cap\be_d^{\perp})-\dd[1, \zeta(d)]\int_{\cM_{\cD_0}}\psi(\bx, y, z)d_{\cD_0}
\Big | \ll |\cC|^{d}\no[\psi]_{C^1}T^{-\a_4}.
$$
\end{Cor}
\begin{proof}  Let $\mathcal{Q}$ be the smooth map from $I^\d1\times\Lambda$ to $\cM_{\cD_0}$ defined by 
$$\mathcal{Q}(\bx, D(y)^{-1}z')=(y\bx, y, z'\cap\be_d^{\perp}),\qquad  (\bx,y)\in I^d, z'\in K\G/\G.$$
Let $\tilde{\psi}=\psi\circ\mathcal{Q}$ be the smooth function on $I^\d1\times\Lambda$.
We are going to show that $\tilde{\psi}\in C^1(I^\d1\times\Lambda)$.
Note that for every $A\in G_0, \bb\in \R^{d-1}$
$$(A\ltimes\bb)D(s)=D(s)(A\ltimes(s^{d/(\d1)}\bb)).$$
Thus at every $w=(\bx, D(y)^{-1}z')\in I^\d1\times\Lambda$, the directional derivatives (see Definition \ref{haowan}) satisfy that $\partial_w Z(\tilde{\psi})=\partial_{\mathcal{Q}(w)} Z(\psi)$ for every $Z\in {\rm Lie}(G_0)$, and $\partial_w Y(\tilde{\psi})= 0$ for every $Y\in {\rm Lie}(H^-)$.
Let $X={\rm diag}(1/(d-1), \cdots, 1/(d-1), -1)\in {\rm Lie}(F)$. 
We have  
$$d\mathcal{Q}(\partial_w X)=\sum_{i=1}^{d-1}x_iy\dd[\partial, \partial x_i]+y\dd[\partial, \partial y].$$ 
It follows easily the function $\tilde{\psi}\in C^1(I^\d1\times\Lambda)$  and $\no[\tilde{\psi}]_{C^1}\ll\no[\psi]_{C^1}$. Moreover, 
\begin{eqnarray}
\int_{I^\d1\times\Lambda}\tilde{\psi}(\bx, \lambda)d\bx d\lambda &=& \int_{I^\d1\times I\times K\G/\G}\psi(y\bx, y, z'\cap\be_d^\perp)y^\d1d\bx dy d\bar{k}(z') \nonumber\\
&=& \int_{\cM_{\cD_0}}\psi(\bx, y, z)d_{\cD_0} \nonumber
\end{eqnarray}
On the other hand, Mahler's criterion implies that the set $\cC_1=\left\{z\in K\G/\G: z\cap \be_d^\perp\in \cC\right\}$ is a relatively compact, open subset of $K\Gamma/\Gamma$, and $\cC_1'=\left\{z\in K\G/\G: z\cap \be_d^\perp\in \cC'\right\}$ is compact. 
Moreover $|\cC_1|\ll |\cC|$ (Remark \ref{radius}).
Since ${\rm supp}(\tilde{\psi})\subseteq I^{d-1}\times F^+\cC_1'$, by Proposition \ref{twist} 
\begin{eqnarray}\label{d0}
&\ &\Big | \dd[1, T^d]\sum_{\ba\in T\mathcal{D}_0\cap\zhat}\psi(\dd[\ba, T], m(\ahat)D(a_d)n(\ahat)\Gamma\cap\be_d^\perp)-\dd[1, \zeta(d)]\int_{\cM_{\cD_0}}\psi(\bx, y, z)d_{\cD_0}
\Big |\nonumber\\
&=&\Big | \dd[1, T^d]\sum_{\ba\in T\mathcal{D}_0\cap\zhat}\tilde{\psi}(\ahat, m(\ahat)D(T)n(\ahat)\Gamma)-\dd[1, \zeta(d)]\int_{I^\d1\times\Lambda}\tilde{\psi}(\bx, \lambda)d\bx d\lambda
\Big |\ll|\cC|^{d}\no[\psi]_{C^1}T^{-\a_4}. \nonumber  
\end{eqnarray}
\end{proof}
\begin{Rem} \label{long}The equidistribution result in Corollary \ref{domain} enables us to derive (\ref{con}). Indeed, for any $\phi\in C(\Omega_0)$ let $\phi_0$ be the function on $\cM_{\cD_0}$ defined by $\phi_0(\bx, y, z):=\chi_{\cD}(\bx, y)\phi(z)$. (Recall that $L_{\ba}=m(\ahat)D(a_d)n(\ahat)\Gamma\cap\be_d^\perp$. ) Then
\begin{equation} \label{4100}
\dd[1, T^d]\sum_{\ba\in T\mathcal{D}\cap\zhat}\phi(L_{\ba})-\dd[\rm{vol}(\cD), \zeta(d)]\int_{\Omega_0}\phi d\bar{\mu}_0=
\dd[1, T^d]\sum_{\ba\in T\mathcal{D}_0\cap\zhat}\phi_0(\dd[\ba, T], L_{\ba})-\dd[1, \zeta(d)]\int_{\cM_{\cD_0}}\phi_0 d_{\cD_0}
\end{equation}
Suppose $\cD$ has boundary of Lebesgue measure zero, we can apply Corollary \ref{domain} and a weak* convergence argument to show that the above expression tends to zero as $T\rightarrow\infty$. This completes the proof of (\ref{con}).
\end{Rem}

Our discussion suggests that to study the error of (\ref{4100}), we have to deal with the error term in the equidistribution result of Corollary \ref{domain} when indicator functions are involved. Technically we consider sets with thin boundary. Let us recall a well-known
\begin{Lem}\label{app}Let $\cD$ be a bounded open subset of $\R^d$ with thin boundary, and $m$ be the Lebesgue measure. Then for every $0<r<1$ there exist $C^1$ functions $f_1, f_2$ so that $0\leq f_1\leq \chi_{\cD}\leq f_2$, $\no[f_i-\chi_{\cD}]_{L^1(m)}\ll_{\cD}r$, and $\no[f_i]_{C^1}\ll r^{-1}$ ($i=1, 2$). 
\end{Lem}
\begin{Rem}\label{sv} 
The key fact which guarantees the approximation in Lemma \ref{app} is that, for every $0<r<1$, we have $m(\{x\in\R^d: d(x, \partial\cD)<r\})\ll_{\cD}r$, where $d$ is the Euclidean distance. In view of  \cite[Lemma 1.1]{SV}, the statement of Lemma \ref{app} remains valid when $(\R^d, m)$ is replaced by $(\Omega, \bar{\mu})$. 
\end{Rem}

\begin{Prop}\label{formain}
Let $\cC, \cC'$ be as in Corollary \ref{domain}. Suppose an open subset $\cD\subseteq\cD_0$ has thin boundary as a subset of $\R^{d}$. Then there exist $\a_5>0$ and $C_\cD>0$ so that for every non-negative function $\phi\in C^1(\Omega_0)$ with ${\rm supp}(\phi)\subseteq\cC'$ and every $T>1$, we have that
\begin{equation}\label{uniform}
\Big | \dd[1, T^d]\sum_{\ba\in T\mathcal{D}\cap\zhat}\phi(L_{\ba})-\dd[\rm{vol}(\cD), \zeta(d)]\int_{\Omega_0}\phi d\bar{\mu}_0\Big | \ll C_\cD |\cC|^{d}\no[\phi]_{C^1}T^{-\a_5}.
\end{equation}
Here $\no[\phi]_{C^1}:=\lo[\phi]+\sum_{X\in (\cX\cap{\rm Lie}(G_0))}\lo[\partial X(\phi)].$
\end{Prop}
\begin{proof} For every function $f$ on $\cD_0$ and function $\psi$ on $\Omega_0$, we denote by $f\otimes\psi$ the function on $\cM_{\cD_0}$ defined by $(f\otimes\psi)(\bx, y, z)=f(\bx, y)\psi(z)$.
Since $\cD$ has thin boundary in $\R^{d}$, for every $0<r<1$ we take $f_1, f_2$ as in Lemma \ref{app} (restricted on $\cD_0$).
By Corollary \ref{domain} and Lemma \ref{app}, for $i=1,2$
 \begin{equation}\label{410}
\Big | \dd[1, T^d]\sum_{\ba\in T\mathcal{D}_0\cap\zhat}(f_i\otimes\phi)(\dd[\ba, T], L_{\ba})-\dd[1, \zeta(d)]\int_{\cM_{\cD_0}}(f_i\otimes\phi) d_{\cD_0} \Big| 
\ll |\cC|^{d}r^{-1}\no[\phi]_{C^1}T^{-\a_4}. 
\end{equation}
Again by Lemma \ref{app}, we have that $\no[(f_1\otimes\phi)-(f_2\otimes\phi)]_{L^1(\cM_{\cD_0})}\ll_{\cD} r\lo[\phi]$. Notice that $f_1\otimes\phi\leq \chi_{\cD}\otimes\phi\leq f_2\otimes\phi$ as $\phi$ is non-negative. It follows easily that estimate (\ref{uniform}) holds.
\end{proof}





\begin{flushleft}{\bf{Proof of Theorem \ref{main}:}}\end{flushleft}

Let $Q_0$ be the covering radius function as before. 
For any fixed $R>0$ we show that $\{L\in\Omega_0: Q_0(L)<R\}$ has thin boundary as a subset of $\Omega_0$.
That is, the set $E_R=\{L\in\Omega_0: Q_0(L)=R\}$ is contained in a union of finitely many bounded connected submanifolds of $\Omega_0$ of strictly lower dimension.  
In fact the statement is ``almost'' established in \cite[ Lemma 7]{M}, and we will provide some further explanation of the results there. 
In view of Remark \ref{sv}, our Theorem \ref{main}  can be deduced with essentially the same argument as in Proposition \ref{formain} . 

Let $\Sigma_1, \cdots, \Sigma_d$ be the faces of the standard simplex $\Delta_{d-1}$. We fix a Borel $\Gamma_0$-fundamental domain $\cF_0$ in $G_0$ so that every compact subset in $\Omega_0$ corresponds to a relatively compact subset in $\cF_0$, and set $\mathcal{L}_R=\left\{A\in\cF_0: Q_0(A\Z^{d-1})=R  \right\}$. By \cite[Lemma 7]{M}, 
$$\mathcal{L}_R\subseteq\bigcup_{\bn_1,\cdots, \bn_d\in\Z^{d-1}}\{A\in \cF_0: {\rm there\ exists\ }\zeta\in\R^{d-1} {\rm\ so\ that\ }A\bn_i\cap(R\Sigma_i^\circ+\zeta)\neq\emptyset\ (i=1,\cdots, d)
\}.
$$
As $\mathcal{L}_R$ is a relatively compact subset in $G_0$ (Lemma \ref{compact}), there exists $c>0$ so that $B_{R^{d-1}}(c)$ contains a fundamental domain of $A\Z^{d-1}$ for every $A\in\mathcal{L}_R$. Hence $\mathcal{L}_R$ is a subset of 
\begin{equation}\bigcup_{\bn_1,\cdots, \bn_d\in\Z^{d-1}}\left\{A\in \cF_0: {\rm there\ exists\ }\no[\zeta]<c\ {\rm so\ that\ }A\bn_i\cap(R\Sigma_i^\circ+\zeta)\neq\emptyset
\ (i=1,\cdots, d)\right\}.\label{final}
\end{equation}
Since $\mathcal{L}_R$ is relatively compact, we have that $\no[A\bn_i]\gg_{R}\no[\bn_i]$ whenever $A\in\mathcal{L}_R$. As the set $R\Sigma_i^\circ+B_{\R^{d-1}}(c)$ is bounded, it follows that in (\ref{final}) $\mathcal{L}_R$ is contained in a finite union. To complete the proof of the theorem, it suffices to show that for every fixed $d$ integral vectors $\bn_1,\cdots, \bn_d\in\Z^{d-1}$, the set
\begin{equation}\left\{A\in \cF_0: {\rm there\ exists\ }\no[\zeta]<c\ {\rm so\ that\ }A\bn_i\cap(R\Sigma_i^\circ+\zeta)\neq\emptyset
\ (i=1,\cdots, d)\right\}\label{final1}
\end{equation}
is contained in a union of finitely many bounded connected submanifolds of strictly lower dimension. In the proof of  \cite[Lemma 7]{M}, it is shown that 
\begin{equation}(\ref{final1})\subseteq\left\{A=(a_{ij})\in G_0:  {\rm tr}(LA)=R\right\},\label{final2}
\end{equation}
where $L$ is the $(n-1)\times(n-1)$ matrix whose $i$-th column is $\bn_i-\bn_d$.
Because $\mathcal{L}_R$ is a relatively compact , there is a constant $C_R>0$, so that (\ref{final2}) can be refined to
\begin{equation}(\ref{final1})\subseteq\left\{A=(a_{ij})\in G_0: |a_{ij}|<C_R,  {\rm tr}(LA)=R\right\}.\label{final3}
\end{equation}
The set $\left\{A=(a_{ij})\in G_0: |a_{ij}|<C_R,  {\rm tr}(LA)=R\right\}$ is a semi-algebraic set, and hence standard results in real algebraic geometry (see for example \cite[2.9]{BCR}) imply that it can be written as a union of finitely many bounded connected submanifold of $G_0$ of strictly lower dimension.
As the map $\pi_0: G_0\rightarrow\Omega_0$ given by $\pi_0(g)=g\Gamma_0\ (g\in G_0)$ is a local diffeomorphism, we conclude that $\{L\in\Omega_0: Q_0(L)<R\}$ has thin boundary as a subset of $\Omega_0$.


\bibliographystyle{plain}

\bigskip

\noindent Department of Mathematics, Yale University, New Haven, CT 06520, USA. 

\smallskip\noindent
li.han@yale.edu

\end{document}